\documentclass[12pt, reqno]{amsart}
\usepackage{amsmath, amsthm, amscd, amsfonts, amssymb, graphicx, color}
\usepackage{mathrsfs}
\usepackage{graphicx}
\usepackage{caption}
\usepackage{subcaption}
\usepackage[bookmarksnumbered, colorlinks, plainpages]{hyperref}
\hypersetup{colorlinks=true,linkcolor=red, anchorcolor=green, citecolor=cyan, urlcolor=red, filecolor=magenta, pdftoolbar=true}

\newtheorem{theorem}{Theorem}[section]
\newtheorem{lemma}[theorem]{lemma}

\theoremstyle{remark}
\newtheorem{remark}[theorem]{\bf{Remark}}
\numberwithin{equation}{section}
\allowdisplaybreaks
\begin{document}

\title[]{Logarithmic Inverse Coefficients and moduli differences of Janowski convex class}

\author[C. Dhara and  N. Ghosh]{Chayani Dhara and Nirupam Ghosh}
	
\address[Dhara]{Department of Mathematics, Indian Institute of Engineering Science and Technology, Shibpur, Howrah 711103, West Bengal, India}
\email{chayanidhara1999@gmail.com}

\address[Ghosh] {Department of Mathematics, Indian Institute of Engineering Science and Technology, Shibpur, Howrah 711103, West Bengal, India}
\email{nirupamghoshmath@gmail.com}

\subjclass[2020]{30C45, 30C50, 30C80}

\keywords{Univalent, Convex functions, Differential subordination, Schwarz function, Coefficient estimates, Logarithmic inverse coefficient, Hankel determinant.}

\begin{abstract} 
	 In this paper, we study the sharp bounds of the first three logarithmic inverse coefficients for Janowski convex class $\mathcal{C}(A, B)$. We also derive sharp upper and lower bounds of $\bigl|\,\gamma_2 \,\bigr|-\bigl|\,\gamma_1\,\bigr|$ and $\bigl|\,\Gamma_2 \,\bigr|-\bigl|\,\Gamma_1\,\bigr|$ for functions in the class $\mathcal{C}(A, B)$. Furthermore, a sharp estimate for the second Hankel determinant associated with the logarithmic inverse coefficients for functions in $\mathcal{C}(A, B)$  is obtained.
\end{abstract}

\maketitle	

\section{Introduction}
Let  $\mathcal{H}$ be the class of holomorphic functions defined on the unit disk $\mathbb{D}:=\{z\in\mathbb{C}:|z|<1\}$. Let $\mathcal{A}$ be the class of functions $f\in\mathcal{H}$ such that $f(0) = 0 $ and $ f'(0) = 1 $. Any functions $f \in \mathcal{A}$ have the following power series expansion
\begin{equation}\label{eq:1.1}
f(z)=z+\sum_{n=2}^{\infty} a_n z^n, \quad z \in \mathbb{D}.
\end{equation}
 The most familiar subclass of $\mathcal{A}$ consists of univalent (i.e, one-to-one) functions $f$ in $\mathbb{D}$, and it is denoted by $\mathcal{S}$. 
Let $\mathcal{S}^*$ and $\mathcal{C}$ be the classes of starlike and convex functions in $\mathcal{S}$, respectively. A function $f\in\mathcal{A}$ is said to be a starlike function in $\mathbb{D}$ if $f(\mathbb{D})$ is a starlike domain with respect to the origin
and $f\in\mathcal{A}$ is said to be  convex if $ f(\mathbb{D})$ is a convex domain.

For $f, g \in \mathcal{H}$, we say that  $f$ is subordinate to $g$, written as
$f \prec g$, if there exists an analytic function $\omega : \mathbb{D} \to \mathbb{D}$
satisfying $\omega(0) = 0$ such that
$$f(z) = g(\omega(z)), \quad z \in \mathbb{D}.$$
In particular, if $g$ is univalent in $\mathbb{D}$, then the subordination $f \prec g$ holds if and only if $f(0) = g(0)$ and $f(\mathbb{D}) \subset g(\mathbb{D})$.
For $\varphi \in \mathcal{H}$ with $\varphi(0)=1$, let the classes $\mathcal{S}^*(\phi)$ and  $\mathcal{C}(\varphi)$ be defined by $$\mathcal{S}^*(\phi) = \left\{ f \in \mathcal{A} :
\frac{z f'(z)}{f(z)} \prec \phi(z) \right\},~~~~~~~~~~~~\mathcal{C}(\phi) = \left\{ f \in \mathcal{A} :
1 + \frac{z f''(z)}{f'(z)} \prec \phi(z) \right\}.$$
This class was introduced by Ma and Minda~\cite{MaMinda1992} under the assumptions that $\varphi$ is univalent in $\mathbb{D}$, has positive real part, and maps $\mathbb{D}$ onto a domain symmetric with respect to the real axis and starlike with respect to the point $\varphi(0) = 1$, with $\varphi'(0) > 0$.
These two classes are referred to as the Ma--Minda classes of starlike and convex functions, respectively.
It is worth noting that 
$f \in \mathcal{C}(\phi)$ if and only if $z f'(z) \in \mathcal{S}^*(\varphi).$
By choosing appropriate functions $\varphi$, these general classes reduce to many well-known subclasses studied in geometric function theory.
For example, if $\varphi(z) = {(1+z)}/{(1-z)},$
then $\mathcal{S}^*(\varphi)$ and $\mathcal{C}(\varphi)$ reduce to the classical classes $\mathcal{S}^*$ and $\mathcal{C}$ of starlike and convex functions respectively.
If $\varphi(z) = \bigl(1+(1-2\alpha)z\bigr)/(1-z), 0 \le \alpha < 1$ then the corresponding classes become $\mathcal{S}^*(\alpha)$ and $\mathcal{C}(\alpha)$ representing Starlike class of order $\alpha$ and convex class of order $\alpha$ respectively. 
Further, if $\phi(z) = {(1+Az)}/{(1+Bz)}, -1 \le B < A \le 1, $ then we obtain the Janowski class of starlike functions $\mathcal{S}^*(A,B)$ and convex functions $\mathcal{C}(A,B)$ respectively. Significant results about these classes can be found in \cite{Janowski1973a, Janowski1973b}. 

For each $f \in \mathcal{S}$ there exists a unique function $F_{f}\in\mathcal{A} $ such that $ F_{f}(0)=0$ and $$f(z)=z\exp(F_{f}(z)), ~~~z \in \mathbb{D}.$$ 
 Moreover, for some neighborhood of the origin, $F_{f}$ can be expressed as 
 \begin{equation}\label{P2-eq-01A}
   F_{f}(z)=\log \frac{f(z)}{z} = 2\sum_{n=1}^{\infty} \gamma_n z^n,   
 \end{equation}
where $\gamma_n,~n \in \mathbb{N}$ are known as logarithmic coefficients of $f \in \mathcal{S}$.
Note that differentiating (\ref{P2-eq-01A}) and using (\ref{eq:1.1}), we get
\begin{align}\label{eq:gamma-coefficients}
    \gamma_1 &= \frac{1}{2}a_2, \nonumber \\
    \gamma_2 &= \frac{1}{2}\left(a_3 - \frac{1}{2}a_2^2\right), \nonumber \\
    \gamma_3 &= \frac{1}{2}\left(a_4 - a_2 a_3 + \frac{1}{3}a_2^3\right). 
\end{align}

As $|a_2| \leq 2$, for $f \in \mathcal{S}$, $|\gamma_1| \leq 1$. Using the Fekete-Szeg\"o inequality \cite [Theorem 3.8]{duren1983univalent} for $ f \in \mathcal{S}$, we can get the sharp estimate 
$|\gamma_{2}| \leq \frac{1}{2} (1 + 2 e^{-1})| = 0.635\ldots.$
For $n \geq 3$, no significant upper bound for $|\gamma_n|$ where $f \in \mathcal{S}$ is yet to be known. Logarithmic coefficients for the class $\mathcal{S}$ and its subclasses are a recent topic of interest for various authors. Several important contributions to this topic have appeared in 
\cite{Firoz2018, Duren1979, Elhosh1996, Girela2000, LeckoSim2025}.
Recently, Ali and Thakur \cite{Ali2026}  obtained  the sharp estimates of the first three logarithmic coefficients for functions in the class $\mathcal{C}( A, B )$.\\

The notion of the logarithmic coefficients of the inverse of $f$ was introduced by Ponnusamy et al. \cite{Ponnusamy2018}. The logarithmic inverse coefficients $\Gamma_n, n \in \mathbb{N}$, of $f$ are defined by the equation 
\begin{equation}\label{eq:log-inverse-def}
    \log \frac{f^{-1}(w)}{w}
    = 2\sum_{n=1}^{\infty} \Gamma_n\, w^n,
    \qquad |w|<\frac{1}{4}.
\end{equation}
In \cite{Ponnusamy2018}, Ponnusamy et al. obtained a sharp bound for the logarithmic inverse coefficients for the class $\mathcal{S}$. In fact, Ponnusamy et al. \cite{Ponnusamy2018} established for $f \in \mathcal{S}$ that $|\Gamma_n| \le \frac{1}{2n}\binom{2n}{n},$
and showed that the equality holds only for the Köebe function or its rotations. Moreover, Ponnusamy et al. \cite{Ponnusamy2018} obtained a sharp bound for logarithmic inverse coefficients for some of the important geometric subclasses of $\mathcal{S}$. It is also known that  for $f \in \mathcal{A}$  defined by \eqref{eq:1.1}, logarithmic inverse coefficients and Taylor's coefficients are related as 
\begin{align}\label{eq:Gamma-coefficients}
    \Gamma_1  = -\frac{1}{2}a_2,~~
    \Gamma_2  = -\frac{1}{2}\left(a_3 - \frac{3}{2}a_2^2\right)~~ \mbox{and} ~~~
    \Gamma_3  = -\frac{1}{2}\left(a_4 - 4a_2 a_3 + \frac{10}{3}a_2^3\right). 
\end{align}
In 2023, as a consequence of  Loewner theory, Lecko and Partyka~\cite{LeckoPartyka2023} investigated sharp upper and lower bounds for $|\gamma_2| - |\gamma_1|$ for functions in the class $\mathcal{S}$. Subsequently, Obradović and Tuneski~\cite{ObradovicTuneski2023} presented a simpler proof of this result. Moreover Kumar and Cho~\cite{KumarCho2023} established sharp bounds for $\bigl|\,\gamma_2 \,\bigr|-\bigl|\,\gamma_1\,\bigr|$ within several subclasses of $\mathcal{S}$. More recently, Allu and Shaji \cite{Allu2025} estimated the sharp upper and lower bounds for moduli difference $\bigl|\,\Gamma_2 \,\bigr|-\bigl|\,\Gamma_1\,\bigr|$ for functions in class $\mathcal{S}$ and for functions in some important subclasses of univalent functions.\\[4mm]
For $q, n \in \mathbb{N}$, the Hankel determinant $H_{q,n}(f)$ of the Taylor's coefficients of a
function $f \in \mathcal{A}$ of the form \eqref{eq:1.1} is defined by
$$
H_{q,n}(f) =
\begin{vmatrix}
a_n & a_{n+1} & \cdots & a_{n+q-1} \\
a_{n+1} & a_{n+2} & \cdots & a_{n+q} \\
\vdots & \vdots & \ddots & \vdots \\
a_{n+q-1} & a_{n+q} & \cdots & a_{n+2(q-1)}
\end{vmatrix}.
$$
Hankel determinants of various orders have recently attracted considerable attention and have been investigated by several authors (see \cite{ref12, Krishna2015, ref5, ref18, Sim2022}). It is well known that the Fekete-Szegö functional corresponds to the second Hankel determinant $H_{2,1}(f)$. Fekete-Szegö then further generalized  by establishing bounds for  $\lvert a_3 - \mu a_2^2 \rvert$
with $\mu \in \mathbb{R}$  for $f \in \mathcal{S}$ (see \cite[Theorem 3.8]{duren1983univalent}). Recently increasing attention has been devoted to the investigation of Hankel determinants involving logarithmic coefficients (denoted by $H_{q,n}(F_f/2) $)  and logarithmic inverse coefficients (denoted by $H_{q,n}(F_{f^{-1}}/2)$) for various geometric subclasses of $\mathcal{S}$ (see \cite{ref3, ref6, ref13,ref14, SunWang2025}).\\

In \ref{section-2}, we collect several auxiliary results and lemmas that are required in the proofs of the main theorems. \ref{section-3} focuses on obtaining sharp bounds for the logarithmic inverse coefficients of functions in the Janowski convex class $\mathcal{C}(A, B)$. As consequences of these results, we recover several known estimates for the class $\mathcal{C}(\alpha)$ and obtain new sharp bounds that complete the determination of $|\Gamma_3(F)|$ for all $\alpha\in[0,1)$.  In \ref{section-4}, we establish sharp upper and lower bounds for the differences of the moduli of successive logarithmic coefficients and logarithmic inverse coefficients associated with functions in $\mathcal{C}(A, B)$. Finally, in \ref{section-5}, we derive sharp estimates for the second Hankel determinant formed by the logarithmic inverse coefficients of functions belonging to $\mathcal{C}(A, B)$.

\section{Preliminary Results}\label{section-2}

In this section, we present several known results from the literature that
will be instrumental in establishing our main theorems.\\
Let $\mathcal{B}$ be the class of all analytic functions on $\mathbb{D}$ satisfying $|\omega(z)| < 1$.   Also  $\mathcal{B}_0$ is a geometric subclass of $\mathcal{B}$ defined by the normalization $\omega(0)=0$. Any function  $\omega \in \mathcal{B}_0$ have the following power series expansion 
\begin{equation}\label{P2-eq-001}
\omega(z) = \sum_{n=1}^{\infty} c_n z^n. 
\end{equation}

The following lemmas for functions in the class $\mathcal{B}_0$ are essential to prove our results. 

\begin{lemma}\cite{Carlson1940} \label{p2-lemma-001}
Let $\omega \in \mathcal{B}_0$ be of the form (\ref{P2-eq-001}). Then for any $\lambda \in \mathbb{C}$,
the following sharp estimates hold:
$$|c_2 + \lambda c_1^2| \le \max\{1, |\lambda|\}.$$
\end{lemma}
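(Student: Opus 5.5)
The plan is to prove $|c_2+\lambda c_1^2|\le \max\{1,|\lambda|\}$ from the standard coefficient inequality for Schwarz functions,
\begin{equation*}
|c_2| \le 1-|c_1|^2,
\end{equation*}
and then maximise a one-variable function. First I would establish this inequality. Write $\omega(z)=z\,\psi(z)$. By the Schwarz lemma $\psi$ maps $\mathbb{D}$ into $\overline{\mathbb{D}}$, with $\psi(0)=c_1$ and $\psi'(0)=c_2$. If $|c_1|=1$, then $\psi$ is a unimodular constant, so $\omega(z)=c_1 z$ and $c_2=0$. Otherwise $|\psi|<1$ on $\mathbb{D}$. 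The Schwarz--Pick lemma at the origin then gives $|\psi'(0)|\le 1-|\psi(0)|^2$, which is the claimed inequality. (Alternatively, one can compose $\psi$ with the disk automorphism sending $c_1$ to $0$ and apply the Schwarz lemma to that composition.)

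Next, put $t=|c_1|\in[0,1]$. By the triangle inequality,
\begin{equation*}
|c_2+\lambda c_1^2|\le 1-t^2+|\lambda|t^2 = 1+(|\lambda|-1)t^2 .
\end{equation*}
The right-hand side is affine in $t^2$, so its maximum over $t\in[0,1]$ is attained at an endpoint. At $t=0$ it equals $1$, and at $t=1$ it equals $|\lambda|$. Hence the bound is $\max\{1,|\lambda|\}$.

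For sharpness, I would use two extremal functions. The function $\omega(z)=z^2$ gives $c_1=0$ and $c_2=1$, so the expression equals $1$. The function $\omega(z)=z$ gives $c_1=1$ and $c_2=0$, so the expression equals $|\lambda|$. Thus whichever of $1$ and $|\lambda|$ is larger is attained, and the estimate is sharp for every $\lambda\in\mathbb{C}$.

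The only substantive step is the inequality $|c_2|\le 1-|c_1|^2$, obtained via Schwarz--Pick. The degenerate case $|c_1|=1$ must be handled separately, but it is immediate because $\psi$ is then constant. Once that inequality is in place, the rest is elementary.
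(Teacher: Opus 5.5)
Your argument is correct and complete. The paper does not prove this lemma; it is quoted from Carlson \cite{Carlson1940}, so there is no internal proof to match.

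What you give is the standard self-contained route. You first get the Schwarz--Pick bound $|c_2|\le 1-|c_1|^2$, correctly disposing of the degenerate case $|c_1|=1$ beforehand, where $\omega(z)=c_1z$. You then apply the triangle inequality and observe that $1+(|\lambda|-1)t^2$ is affine in $t^2$, so its maximum on $[0,1]$ occurs at an endpoint.

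Within the paper's own toolkit, your first step is exactly the content of Lemma~\ref{lemma-4}. After a rotation making $c_1\ge 0$, the representation $c_2=(1-c_1^2)x$ with $|x|\le 1$ is equivalent to $|c_2|\le 1-|c_1|^2$, so you could cite that lemma instead of re-deriving the inequality.

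Your extremal functions $\omega(z)=z^2$ and $\omega(z)=z$ are the ones underlying $g_2$ and $g_1$. The paper uses these in Theorem~\ref{P2-thm-3.1} to show that both branches of the $|\Gamma_2|$ bound are attained.
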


\begin{lemma}\cite{Libera1982}\label{lemma-4}
Let $\omega \in \mathcal{B}_0$ be of the form (\ref{P2-eq-001}). Then for some $x,s \in \mathbb{C}$ with $|x|\le1$ and $|s|\le1$
\begin{align*}
& c_2=(1-c_1^2)x,\\
& c_3=(1-c_1^2)(1-|x|^2)s-(1-c_1^2)c_1x^2.
\end{align*}
\end{lemma}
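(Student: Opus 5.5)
The plan is to run the Schur algorithm twice on $\omega$ and read off $c_2$ and $c_3$ from the first two Taylor coefficients of the resulting functions. Two points need flagging first.

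\begin{itemize}
\item The natural output of this argument has $|c_1|^2$ and $\bar c_1$ in place of $c_1^2$ and $c_1$. It reduces to the form stated in Lemma~\ref{lemma-4} when $c_1$ is real.
\item When $c_1\in[0,1]$ (which can always be arranged by replacing $\omega(z)$ with $e^{-i\theta}\omega(e^{i\theta}z)$), the formulas are exactly as stated. This is the normalization in which the lemma will be applied.
\end{itemize}

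\textbf{Step 1: first Schur step.} Since $\omega(0)=0$, Schwarz's lemma gives that $\omega_1(z):=\omega(z)/z$ belongs to $\mathcal{B}$, with
\[
\omega_1(z)=c_1+c_2z+c_3z^2+\cdots .
\]
If $|c_1|=1$, then $\omega_1$ is a unimodular constant. In that case $c_2=c_3=0$, and both identities hold with $x=s=0$. So assume from now on that $|c_1|<1$.

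\textbf{Step 2: second Schur step.} Define
\[
\omega_2(z):=\frac{\omega_1(z)-c_1}{z\,\bigl(1-\bar c_1\,\omega_1(z)\bigr)}.
\]
This is analytic in $\mathbb{D}$. It is a disk automorphism applied to $\omega_1$, which vanishes at $0$, divided by $z$. So Schwarz's lemma again gives $\omega_2\in\mathcal{B}$.

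To expand $\omega_2$, first note that
\[
1-\bar c_1\omega_1(z)=(1-|c_1|^2)\Bigl(1-\tfrac{\bar c_1c_2}{1-|c_1|^2}\,z+\cdots\Bigr).
\]
Expanding the quotient as a geometric series then gives
\[
\omega_2(z)=\frac{c_2}{1-|c_1|^2}+\Bigl(\frac{c_3}{1-|c_1|^2}+\frac{\bar c_1c_2^2}{(1-|c_1|^2)^2}\Bigr)z+\cdots .
\]

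\textbf{Step 3: define $x$.} Set $x:=\omega_2(0)$, so that $|x|\le 1$. This immediately gives
\[
c_2=(1-|c_1|^2)\,x .
\]

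\textbf{Step 4: define $s$ and solve for $c_3$.} Write $\omega_2(z)=x+d_1z+\cdots$. The Schwarz--Pick inequality $|\omega_2'(0)|\le 1-|\omega_2(0)|^2$ gives $|d_1|\le 1-|x|^2$.

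\begin{itemize}
\item If $|x|<1$, put $s:=d_1/(1-|x|^2)$, so $|s|\le1$.
\item If $|x|=1$, then $\omega_2$ is constant, so $d_1=0$, and any $s$ works.
\end{itemize}

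Equating the coefficient of $z$ in Step 2 with $d_1=(1-|x|^2)s$, and substituting $c_2=(1-|c_1|^2)x$, yields
\[
c_3=(1-|c_1|^2)(1-|x|^2)s-(1-|c_1|^2)\,\bar c_1\,x^2 .
\]
For real $c_1$ this is exactly the stated formula.

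\textbf{Main obstacle.} The only delicate point is the coefficient expansion of the quotient in Step 2, where the sign and the conjugate on $c_1$ have to come out right. The degenerate cases $|c_1|=1$ and $|x|=1$ are handled by the constant-function remarks above.
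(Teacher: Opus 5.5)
The paper does not prove this lemma; it only cites \cite{Libera1982}. The displayed form is the Libera--Zlotkiewicz representation $2p_2=p_1^2+x(4-p_1^2)$ (and its companion for $p_3$) for Carath\'eodory coefficients, transported through $p=(1+\omega)/(1-\omega)$, i.e.\ $p_1=2c_1$, $p_2=2c_2+2c_1^2$, $p_3=2c_3+4c_1c_2+2c_1^3$. Like the form displayed here, it is only valid once $p_1$ (equivalently $c_1$) has been normalized to be real.

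Your two-step Schur algorithm is a different, self-contained argument carried out directly on $\omega$, and it is correct. The expansion of $\omega_2$, the Schwarz--Pick bound $|d_1|\le 1-|x|^2$, the algebra leading to $c_3$, and the degenerate cases $|c_1|=1$ and $|x|=1$ all check out. Your route gives the version valid for every complex $c_1$, with $|c_1|^2$ and $\bar c_1$, and it specializes to the stated one when $c_1$ is real. One wording point: Step~2 only gives $|\omega_2|\le 1$, not $\omega_2\in\mathcal{B}$ in the paper's strict sense, and your case $|x|=1$ is exactly where these differ.

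Your first caveat is not merely a feature of your method: as printed, the lemma is false for non-real $c_1$. Take $0<a<\sqrt2-1$ and $\omega(z)=z\,\frac{z+ia}{1-iaz}\in\mathcal{B}_0$, so $c_1=ia$, $c_2=1-a^2$, $c_3=ia(1-a^2)$. Since $1-c_1^2=1+a^2\neq 0$, the first identity forces $x=\frac{1-a^2}{1+a^2}$, and then the second forces
\[
|s|=\frac{\bigl|c_3+(1-c_1^2)c_1x^2\bigr|}{|1-c_1^2|\,(1-|x|^2)}=\frac{1-a^2}{2a}>1 .
\]
So realness of $c_1$ is a genuine hypothesis. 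It is harmless in Section~5, where $c_1=u\in[0,1]$ is arranged before the lemma is used.

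However, the rotation you propose for arranging it does not work: $e^{-i\theta}\omega(e^{i\theta}z)=c_1z+c_2e^{i\theta}z^2+c_3e^{2i\theta}z^3+\cdots$ has the same first coefficient. Use $\omega(e^{i\theta}z)$ instead, whose coefficients are $c_ne^{in\theta}$; this is what the rotation $f\mapsto e^{-i\theta}f(e^{i\theta}z)$ induces on the Schwarz function. Alternatively, use $e^{i\theta}\omega(z)$.
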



\begin{lemma}\cite{ProkhorovSzynal1981} \label{p2-lemma-002}
Let $\omega \in \mathcal{B}_0$ be of the form (\ref{P2-eq-001}). Then, for any real parameters
$\mu$ and $\nu$, the following inequality holds:
$$
|c_3 + \mu c_1 c_2 + \nu c_1^3| \le
\begin{cases}
1, & (\mu,\nu) \in D_1 \cup D_2 \cup \{(2,1)\}, \\[6pt]
|\nu|, & (\mu,\nu) \in \displaystyle\bigcup_{k=3}^{7} D_k, \\[8pt]
\dfrac{2}{3}(|\mu|+1)\left(\dfrac{|\mu|+1}{3(|\mu|+1+\nu)}\right)^{1/2},
& (\mu,\nu) \in D_8 \cup D_9, \\[10pt]
\dfrac{1}{3}\nu \left(\dfrac{\mu}{\mu^2 - 4\nu}\right)
\left(\dfrac{\mu^2 - 4}{3(\nu-1)}\right)^{1/2},
& (\mu,\nu) \in D_{10} \cup D_{11} \setminus \{(2,1)\}, \\[10pt]
\dfrac{2}{3}(|\mu|-1)\left(\dfrac{|\mu|-1}{3(|\mu|-1-\nu)}\right)^{1/2},
& (\mu,\nu) \in D_{12}.
\end{cases}
$$
The regions $D_k  ~~ \mbox{for}~~~k=1,\dots,12$ are defined as follows:
\begin{align*}
& D_1=\{(\mu,\nu):|\mu|\le \tfrac12,\; \nu \le 1\},\\
& D_2=\{(\mu,\nu):\tfrac12\le|\mu|\le 2,\; \tfrac{4}{27}(|\mu|+1)^3-(|\mu|+1)\le \nu\le 1\},\\
& D_3=\{(\mu,\nu):|\mu|\le\tfrac12,\;\nu\le -1\},\\
& D_4=\{(\mu,\nu):|\mu|\ge\tfrac12,\;\nu\le -\tfrac23(|\mu|+1)\},\\
& D_5=\{(\mu,\nu):|\mu|\le 2,\; \nu\ge 1\},\\
& D_6=\{(\mu,\nu):2\le|\mu|\le 4,\; \nu \ge \tfrac1{12}\sqrt{\mu^2+8}\},\\
& D_7=\{(\mu,\nu):|\mu|\ge 4,\;|\nu|\ge \tfrac23(|\mu|-1)\},\\
& D_8 = \left\{ (\mu,\nu) : \tfrac{1}{2} \le |\mu| \le 2,\ 
-\tfrac{2}{3}(|\mu|+1) \le \nu \le
\frac{4}{27}(|\mu|+1)^3 - (|\mu|+1) \right\},\\
& D_9 = \left\{ (\mu,\nu) : |\mu| \ge 2,\ 
-\tfrac{2}{3}(|\mu|+1) \le \nu \le
\frac{2|\mu|(|\mu|+1)}{\mu^2 + 2|\mu| + 4} \right\},\\
& D_{10} = \left\{ (\mu,\nu) : 2 \le |\mu| \le 4,\ 
\frac{2|\mu|(|\mu|+1)}{\mu^2 + 2|\mu| + 4}
\le \nu \le \tfrac{1}{12}(\mu^2 + 2) \right\},\\
& D_{11} = \left\{ (\mu,\nu) : |\mu| \ge 4,\ 
\frac{2|\mu|(|\mu|+1)}{\mu^2 + 2|\mu| + 4}
\le \nu \le\frac{2|\mu|(|\mu|-1)}{\mu^2 - 2|\mu| + 4} \right\},\\
& D_{12} = \left\{ (\mu,\nu) : |\mu| \ge 4,\ 
\frac{2|\mu|(|\mu|-1)}{\mu^2 - 2|\mu| + 4}
\le \nu \le \tfrac{2}{3}(|\mu|-1) \right\}.
\end{align*}

Furthermore, all the above inequalities are sharp.
\end{lemma}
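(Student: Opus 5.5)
Since this is the classical Prokhorov--Szynal estimate, I would follow the standard route. First reduce to a finite-dimensional extremal problem using the parametrization of Lemma~\ref{lemma-4}, and then carry out the optimization case by case.

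\textbf{Reduction.} Because $\mu,\nu$ are real, replacing $\omega(z)$ by $e^{-i\theta}\omega(e^{i\theta}z)$ multiplies $c_3+\mu c_1c_2+\nu c_1^3$ by $e^{2i\theta}$. Hence we may assume $c_1=r\in[0,1]$. By Lemma~\ref{lemma-4},
\begin{equation*}
c_3+\mu c_1c_2+\nu c_1^3=(1-r^2)(1-|x|^2)s+r\bigl(\nu r^2+\mu(1-r^2)x-(1-r^2)x^2\bigr).
\end{equation*}
Taking the supremum over $|s|\le1$ and writing $x=\rho e^{i\varphi}$, the problem becomes the maximization of
\begin{equation*}
\Psi(r,\rho,\varphi)=(1-r^2)(1-\rho^2)+r\,\bigl|\nu r^2+\mu(1-r^2)\rho e^{i\varphi}-(1-r^2)\rho^2e^{2i\varphi}\bigr|
\end{equation*}
over $[0,1]\times[0,1]\times[0,2\pi]$. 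Replacing $\varphi$ by $\varphi+\pi$ turns $\mu$ into $-\mu$, so we may also assume $\mu\ge0$.

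\textbf{Optimization.} Squaring the modulus term gives a quadratic polynomial in $t=\cos\varphi\in[-1,1]$. Maximizing in $t$ produces two regimes:
\begin{itemize}
\item the maximum is at an endpoint $t=\pm1$, which corresponds to real $x$;
\item the maximum is at the interior critical point $t_0=\mu(\nu r^2-(1-r^2)\rho^2)/(4\nu(1-r^2)\rho)$ type expression, when this point lies in $[-1,1]$.
\end{itemize}
In each regime I then optimize over $\rho$ and $r$.

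The trivial boundary values are:
\begin{itemize}
\item $r=0$ gives $\Psi=1$;
\item $r=1$ gives $\Psi=|\nu|$.
\end{itemize}
These account for the cases $D_1\cup D_2$ and $D_3,\dots,D_7$.

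For real $x=\pm\rho$, the function $\Psi$ is explicit in $(r,\rho)$, and the critical-point equations can be solved.
\begin{itemize}
\item With $\rho$ at the boundary, or at the interior optimum in $\rho$, the remaining function of $r$ is maximized at $r^2=(|\mu|\pm1)/(3(|\mu|\pm1\pm\nu))$. This yields the values $\tfrac23(|\mu|\pm1)\bigl(\tfrac{|\mu|\pm1}{3(|\mu|\pm1\pm\nu)}\bigr)^{1/2}$ on $D_8\cup D_9$ and on $D_{12}$.
\item The interior-$\varphi$ regime yields the value on $D_{10}\cup D_{11}$.
\end{itemize}
The boundary curves of the regions, such as $\nu=\tfrac4{27}(|\mu|+1)^3-(|\mu|+1)$ and $\nu=2|\mu|(|\mu|+1)/(\mu^2+2|\mu|+4)$, are exactly the loci where one candidate maximum overtakes another, or where a critical point leaves the admissible box. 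I would verify them by comparing the candidate values pairwise.

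\textbf{Sharpness.} Equality is attained by the following choices:
\begin{itemize}
\item $\omega(z)=z^3$ for the value $1$;
\item $\omega(z)=z$ for the value $|\nu|$;
\item in the remaining cases, $\omega(z)=z\,\frac{z+a}{1+\bar a z}$ or $\omega(z)=z\,\frac{z^2+bz+c}{1+\bar b z+\bar c z^2}$ (degree-$3$ Blaschke products), with parameters read off from the optimal $(r,x)$ and $s=1$ or $|x|=1$.
\end{itemize}

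\textbf{Main obstacle.} I expect the hardest step to be the global comparison in the middle regimes, which determines precisely the partition $D_8,\dots,D_{12}$. Here $\Psi$ is not concave, and several local maxima coexist. I would handle this by fixing $r$, maximizing over $(\rho,\varphi)$ in closed form, and then studying the resulting one-variable function of $r^2$ by sign analysis of its derivative on each parameter range.
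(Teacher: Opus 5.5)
The paper gives no proof of this lemma; it is quoted from Prokhorov--Szynal, so there is no in-paper argument to compare with. Your Schur-parameter reduction to $\Psi(r,\rho,\varphi)$ is the natural starting point and is correct up to two slips. First, $e^{-i\theta}\omega(e^{i\theta}z)$ sends $c_n\mapsto e^{i(n-1)\theta}c_n$, so it fixes $c_1$ and does not rotate the functional. Use $\omega(e^{i\theta}z)$ instead, which multiplies $c_1$ by $e^{i\theta}$ and $c_3+\mu c_1c_2+\nu c_1^3$ by $e^{3i\theta}$. This matters, because Lemma~\ref{lemma-4} as printed is valid only for real $c_1$. Second, the critical point in $t=\cos\varphi$ is $t_0=\mu\bigl(\nu r^2-(1-r^2)\rho^2\bigr)/(4\nu r^2\rho)$, and it is a maximum only when $\nu>0$.

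The genuine gap is this: you plan to confirm the printed regions and values by comparing candidates pairwise, but the lemma as printed is false, so that verification cannot succeed. The misprints had to be detected and corrected first.
\begin{itemize}
\item $D_1$ must be $\{|\mu|\le\frac12,\ -1\le\nu\le 1\}$. At $(0,-5)$, $\omega(z)=z$ gives $5>1$.
\item $D_6$ must be $\nu\ge\frac1{12}(\mu^2+8)$. The point $(2,\frac12)$ lies in the printed $D_6$, yet $\omega(z)=z^3$ gives $1>\frac12$.
\item $D_7$ must read $\nu\ge\frac23(|\mu|-1)$, not $|\nu|\ge\frac23(|\mu|-1)$. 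At $(4,-\frac52)$, the $D_9$-extremal $z(c-z)/(1-cz)$ with $c^2=\frac23$ gives $\frac{10}{3}\sqrt{2/3}\approx 2.72>\frac52$.
\item The upper limit of $D_{10}$ is $\frac1{12}(\mu^2+8)$.
\item On $D_{10}\cup D_{11}$ the bound is $\frac{\nu}{3}\cdot\frac{\mu^2-4}{\mu^2-4\nu}\bigl(\frac{\mu^2-4}{3(\nu-1)}\bigr)^{1/2}$. Your own interior-$\varphi$ computation produces exactly this: with $\rho=1$ and $p=r^2$, maximizing over $\varphi$ gives $\Psi^2=(1+(\nu-1)p)^2\bigl(\mu^2-(\mu^2-4\nu)p\bigr)/(4\nu)$, whose critical value is the expression with $\mu^2-4$. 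With $\mu$ in its place, the printed bound at $(5,\,1.8)\in D_{11}$ is about $0.50$, while $\omega(z)=z$ gives $1.8$.
\end{itemize}

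Even for the corrected statement, the substantive part is only announced. Saying that the values at $r=0$ and $r=1$ ``account for'' $D_1\cup D_2$ and $D_3,\dots,D_7$ restates the conclusion; it is not an argument. You still have to show three things:
\begin{itemize}
\item No critical point of $\Psi$ with $0<r<1$ and $0<\rho<1$ exceeds the candidates. In the true result an extremal can always be taken to be $z^3$, $z$, or a degree-two Blaschke product, i.e.\ $r\in\{0,1\}$ or $|x|=1$.
\item The critical values of $r^2$ and $t_0$ are admissible exactly on the stated regions.
\item The winning candidate switches exactly on the curves bounding $D_1,\dots,D_{12}$.
\end{itemize}
That global comparison is the entire content of the Prokhorov--Szynal theorem. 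Your plan to maximize over $(\rho,\varphi)$ for fixed $r$ ``in closed form'' and then do a sign analysis in $r^2$ is not carried out. As it stands, the proposal explains where the candidate values come from but does not prove the bound.
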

Let $\mathcal{P}$ be the class of all analytic functions $p$ in the unit disk 
$\mathbb{D} = \{ z \in \mathbb{C} : |z| < 1 \}$ satisfying $p(0) = 1$ and $Re\, p(z) > 0$ for $z \in \mathbb{D}$.
Therefore, every $p \in \mathcal{P}$ can be represented as 
\begin{equation}\label{P Class-1}
 p(z) = 1 +\sum_{n=1}^{\infty} p_n z^n \quad z \in \mathbb{D}.   
\end{equation}
Elements of the class $\mathcal{P}$ are called \textit{Carath\'eodory functions}. 
\begin{lemma}\cite{SimThomas2020}\label{p1-lemma-003}
Let $p \in \mathcal{P}$ be of the form (\ref{P Class-1}) and  $B_1, B_2,$ and $B_3$ be numbers such that $B_1 \ge 0$,  $B_2 \in \mathbb{C}$, and $B_3 \in \mathbb{R}$. 
Define $\Psi_{+}(p_1,p_2)$ and $\Psi_{-}(p_1,p_2)$ by
$$
\Psi_{+}(p_1,p_2) = \left| B_2 p_1^2 + B_3 p_2 \right| - \left| B_1 p_1 \right|,
$$
and
$$
\Psi_{-}(p_1,p_2) = -\Psi_{+}(p_1,p_2).
$$
Then
$$
\Psi_{+}(p_1,p_2) \le
\begin{cases}
|4B_2 + 2B_3| - 2B_1, & \text{when } |2B_2 + B_3| \ge |B_3| + B_1, \\[6pt]
2|B_3|, & \text{otherwise},
\end{cases}
$$
and
$$
\Psi_{-}(p_1,p_2) \le
\begin{cases}
2B_1 - B_4, & \text{when } B_1 \ge B_4 + 2|B_3|, \\[8pt]
\dfrac{2B_1 \sqrt{2|B_3|}}{\sqrt{B_4 + 2|B_3|}}, 
& \text{when } B_1^2 \le 2|B_3|(B_4 + 2|B_3|), \\[12pt]
2|B_3| + \dfrac{B_1^2}{B_4 + 2|B_3|}, 
& \text{otherwise},
\end{cases}
$$
where
$$
B_4 = |4B_2 + 2B_3|.
$$
All inequalities are sharp.
\end{lemma}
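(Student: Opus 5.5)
The statement can be proved by rewriting $\Psi_{\pm}$ through the Carath\'eodory--Toeplitz (Schur parameter) parametrization of $(p_1,p_2)$. This reduces the problem to maximizing an elementary function of one real variable $t\in[0,1]$.

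\textbf{Reduction.} For $p\in\mathcal{P}$ of the form \eqref{P Class-1} there exist $\zeta_1,\zeta_2\in\overline{\mathbb{D}}$ with
\begin{equation*}
p_1=2\zeta_1,\qquad p_2=2\zeta_1^2+2(1-|\zeta_1|^2)\zeta_2 ,
\end{equation*}
and conversely every such pair is attained by some $p\in\mathcal{P}$. This is the analogue of Lemma \ref{lemma-4} for Carath\'eodory functions.

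Replacing $p(z)$ by $p(e^{i\theta}z)$ multiplies $p_1$ by $e^{i\theta}$ and $p_2$ by $e^{2i\theta}$. It therefore leaves both $|B_2p_1^2+B_3p_2|$ and $|B_1p_1|$ unchanged. So we may assume $\zeta_1=t\in[0,1]$, and then
\begin{equation*}
B_2p_1^2+B_3p_2 = 2t^2(2B_2+B_3)+2B_3(1-t^2)\zeta_2 .
\end{equation*}
Over $|\zeta_2|\le 1$, the modulus of $a+b\zeta_2$ has maximum $|a|+|b|$ and minimum $\max\{|a|-|b|,0\}$. Write $B_4=|4B_2+2B_3|=2|2B_2+B_3|$ and $C=B_4+2|B_3|$. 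This gives
\begin{align*}
\Psi_+ &\le g(t):=2|B_3|+t^2\bigl(B_4-2|B_3|\bigr)-2B_1t,\\
\Psi_- &\le h(t):=2B_1t-\max\{0,\;Ct^2-2|B_3|\}.
\end{align*}

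\textbf{Bound for $\Psi_+$.} The function $g$ is a real quadratic with no interior maximum unless it is concave, and even then the linear term $-2B_1t\le0$ pushes the maximum to an endpoint. One checks directly that $\max_{[0,1]}g=\max\{g(0),g(1)\}$.

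Here $g(0)=2|B_3|$ and $g(1)=B_4-2B_1$. We have $g(1)\ge g(0)$ exactly when $|2B_2+B_3|\ge|B_3|+B_1$. This gives the stated two-case bound.

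\textbf{Bound for $\Psi_-$.} Put $t_0=\sqrt{2|B_3|/C}\le1$.

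On $[0,t_0]$ we have $h(t)=2B_1t$, which is increasing, with value $2B_1t_0=2B_1\sqrt{2|B_3|}/\sqrt{C}$ at $t_0$. On $[t_0,1]$ we have $h(t)=2|B_3|+2B_1t-Ct^2$, a concave parabola with vertex $t^*=B_1/C$. There are three cases.

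\begin{itemize}
\item If $t^*\le t_0$, i.e. $B_1^2\le 2|B_3|C$, the maximum is at $t_0$ and equals $2B_1\sqrt{2|B_3|}/\sqrt{C}$.
\item If $t^*\ge1$, i.e. $B_1\ge B_4+2|B_3|$, the maximum is at $t=1$ and equals $2B_1-B_4$.
\item Otherwise the maximum is at $t^*$ and equals $2|B_3|+B_1^2/C$.
\end{itemize}

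These are exactly the three stated cases. The only point needing care is checking that the case conditions partition the parameter range consistently; in particular the two conditions $t^*\le t_0$ and $t^*\ge1$ cannot hold simultaneously unless $t_0=1$, where the two formulas agree.

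\textbf{Sharpness.} In each case, take $\zeta_1=t$ equal to the maximizing value ($0$, $1$, $t_0$ or $t^*$). Take $\zeta_2$ unimodular and chosen so that $2B_3(1-t^2)\zeta_2$ is aligned with, respectively opposite to, $2t^2(2B_2+B_3)$. For $\Psi_-$ on $[0,t_0]$, where the minimum modulus is $0$, choose $\zeta_2=-t^2(2B_2+B_3)/\bigl(B_3(1-t^2)\bigr)$, which satisfies $|\zeta_2|\le1$ precisely there.

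By the converse part of the parametrization, a $p\in\mathcal{P}$ with these $(\zeta_1,\zeta_2)$ exists. For example, $\zeta_2$ can be realized through a Schur-type function $\omega(z)=z\,(t+\zeta_2 z)/(1+t\bar\zeta_2 z)$ with $p=(1+\omega)/(1-\omega)$. With this $p$, equality holds in every case.

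I expect the main obstacle to be the bookkeeping for $\Psi_-$ rather than any single hard step: showing that the piecewise maximization reproduces exactly the three stated regions and that the boundary cases match.
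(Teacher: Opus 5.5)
The paper does not prove this lemma; it quotes it from Sim and Thomas. Your argument is correct and is essentially the standard proof in that source, written with $t=p_1/2$ after rotating so that $p_1\ge 0$:
\begin{itemize}
\item you use $p_2=2t^2+2(1-t^2)\zeta_2$;
\item you replace $|a+b\zeta_2|$ by its extreme values over $|\zeta_2|\le 1$;
\item you maximize the resulting function of $t\in[0,1]$.
\end{itemize}
Your comparison of the vertex $t^*=B_1/C$ with $t_0$ and $1$ reproduces the three regions for $\Psi_-$ exactly.

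There is one small slip in the sharpness example. The Schur function should be $\omega(z)=z(t+\zeta_2 z)/(1+t\zeta_2 z)$, with no conjugate on $\zeta_2$. As you wrote it, for non-real $\zeta_2$ it is not a self-map of $\mathbb{D}$: for instance $t=1/2$, $\zeta_2=i$ gives $|\omega(-i)|=3$, so $|\omega|>1$ near $-i$. Its second coefficient is also $\zeta_2-t^2\bar\zeta_2$ rather than $(1-t^2)\zeta_2$. This does not damage the proof, since you already rely on the converse of the parametrization.

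Separately, the fully degenerate case $B_2=B_3=0$ leaves $t_0$ undefined because $C=0$. It is trivial and falls under the first case.
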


\section{Inverse logarithmic coefficients}\label{section-3}

We now state our first main result, which provides sharp bounds of $\Gamma_1$, $\Gamma_2$ and $\Gamma_3$ for functions in $\mathcal{C}(A, B)$.

\begin{theorem} \label{P2-thm-3.1}
Let $f\in \mathcal{C}(A,B)$ be of the form \eqref{eq:1.1}. 
Then the first three inverse logarithmic coefficients satisfy the following sharp estimates
\begin{align*}
&|\Gamma_1|\le \frac{A-B}{4},\\
&|\Gamma_2|\le 
\begin{cases}
\frac{A-B}{12},& ~~~~~ |5A-B| \le 4, \\[3mm]
  \frac{(A-B)|5A-B|}{48},&~~~~~ |5A-B| > 4 ,
\end{cases}\\[2mm]
&|\Gamma_3|
\le
\begin{cases}
    \dfrac{A-B}{24}, & (\mu,\nu) \in D_1 \cup D_2, \\[3mm]
   \dfrac{(A-B)|A(3A-B)|}{48}, & (\mu,\nu) \in D_6, \\[3mm]
  \dfrac{(A- B)(|B - 5A| + 2)^{3/2}}{72 \sqrt{3(|B- 5A| + 2 + 3A^2 - AB)}}, & (\mu,\nu) \in D_8 \cup D_9.
\end{cases}
\end{align*}
\end{theorem}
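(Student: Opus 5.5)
The plan is to express $\Gamma_1,\Gamma_2,\Gamma_3$ as polynomials in the first three coefficients $c_1,c_2,c_3$ of a Schwarz function $\omega\in\mathcal{B}_0$. Then Lemma~\ref{p2-lemma-001} handles $\Gamma_2$ and Lemma~\ref{p2-lemma-002} handles $\Gamma_3$.

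\emph{Step 1: coefficients of $f$ in terms of $\omega$.} Since $f\in\mathcal{C}(A,B)$, there is $\omega\in\mathcal{B}_0$ of the form \eqref{P2-eq-001} with $1+zf''(z)/f'(z)=(1+A\omega(z))/(1+B\omega(z))$. Expand the right-hand side as $1+(A-B)\omega-B(A-B)\omega^2+B^2(A-B)\omega^3+\cdots$ and write it as $1+p_1z+p_2z^2+p_3z^3+\cdots$. This gives
\begin{align*}
p_1&=(A-B)c_1,\qquad p_2=(A-B)(c_2-Bc_1^2),\\
p_3&=(A-B)(c_3-2Bc_1c_2+B^2c_1^3).
\end{align*}
On the left-hand side, substitute \eqref{eq:1.1} into $zf''=(1+zf''/f'-1)f'$ and compare coefficients. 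This yields $a_2=p_1/2$ and $a_3=(p_2+p_1^2)/6$. It also gives $a_4$ as an explicit cubic in $p_1,p_2,p_3$, obtained from $12a_4=p_3+2a_2p_2+3a_3p_1$.

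\emph{Step 2: $\Gamma_1$ and $\Gamma_2$.} From \eqref{eq:Gamma-coefficients}, $\Gamma_1=-(A-B)c_1/4$, and $|c_1|\le1$ gives the first bound. Next,
\[
\Gamma_2=-\tfrac12\Bigl(a_3-\tfrac32a_2^2\Bigr)=-\tfrac1{12}\Bigl(p_2-\tfrac54p_1^2\Bigr)=-\frac{A-B}{12}\Bigl(c_2-\frac{5A-B}{4}c_1^2\Bigr).
\]
Lemma~\ref{p2-lemma-001} with $\lambda=-(5A-B)/4$ then gives $|\Gamma_2|\le\frac{A-B}{12}\max\{1,|5A-B|/4\}$, which is exactly the two stated cases. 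For sharpness:
\begin{itemize}
\item $\omega(z)=z$ is extremal for $\Gamma_1$ and, when $|5A-B|>4$, also for $\Gamma_2$.
\item $\omega(z)=z^2$ is extremal for $\Gamma_2$ when $|5A-B|\le4$.
\end{itemize}
The corresponding $f$ is recovered by integrating $f''/f'$.

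\emph{Step 3: $\Gamma_3$ (main computation).} Substitute $a_2,a_3,a_4$ into $\Gamma_3=-\frac12\bigl(a_4-4a_2a_3+\frac{10}{3}a_2^3\bigr)$ and collect terms to get
\[
\Gamma_3=-\frac{A-B}{24}\,\bigl(c_3+\mu c_1c_2+\nu c_1^3\bigr),
\]
where $\mu$ and $\nu$ are explicit real polynomials in $A,B$. The expected values are $\mu=-(5A-B)/2$ up to the bookkeeping, so that $|\mu|+1$ matches the term $|B-5A|+2$ in the statement after rescaling, and $\nu$ a quadratic with $\nu\sim A(3A-B)/2$. I expect the main obstacle to be this algebra: getting $\mu,\nu$ exactly right so that the constants $\frac{(A-B)|A(3A-B)|}{48}$ and $\frac{(A-B)(|B-5A|+2)^{3/2}}{72\sqrt{3(|B-5A|+2+3A^2-AB)}}$ come out. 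I would double-check the result against the known case $A=1$, $B=-1$, i.e.\ the convex class $\mathcal{C}$, where the bound on $|\Gamma_3|$ is known.

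Once $\mu,\nu$ are fixed, apply Lemma~\ref{p2-lemma-002} region by region:
\begin{itemize}
\item On $D_1\cup D_2$ the bound is $1$, giving $\frac{A-B}{24}$.
\item On $D_6$ it is $|\nu|$, giving the $|A(3A-B)|$ expression.
\item On $D_8\cup D_9$ it is $\frac23(|\mu|+1)\sqrt{(|\mu|+1)/(3(|\mu|+1+\nu))}$. Simplify this by substituting the formulas for $\mu,\nu$ and clearing the factor $2$ inside the root.
\end{itemize}
Sharpness is inherited from the sharpness in Lemma~\ref{p2-lemma-002}: for each extremal Schwarz function there (e.g.\ $\omega(z)=z^3$ on $D_1\cup D_2$, $\omega(z)=z$ on $D_6$, and the Blaschke-type extremal on $D_8\cup D_9$), the function $f$ defined by $1+zf''/f'=(1+A\omega)/(1+B\omega)$ lies in $\mathcal{C}(A,B)$ and attains equality.
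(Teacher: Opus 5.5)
Your route is the same as the paper's: represent $f$ through a Schwarz function, apply Lemma~\ref{p2-lemma-001} to $\Gamma_2$ and Lemma~\ref{p2-lemma-002} to $\Gamma_3$, and transfer sharpness via $\omega(z)=z,z^2,z^3$ and the Blaschke-type extremal. The algebra you hedge on comes out exactly as you guessed. From your $p_n$ one gets $\Gamma_3=-\frac1{48}\bigl(2p_3-5p_1p_2+3p_1^3\bigr)$, hence $\mu=\frac{B-5A}{2}$ and $\nu=\frac{3A^2-AB}{2}$ with no further correction. The constants on $D_1\cup D_2$ and on $D_8\cup D_9$ then match the statement.

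The step that fails as written, in exactly the same way as in the paper, is ``on $D_6$ the bound is $|\nu|$'' with $D_6$ as printed in Lemma~\ref{p2-lemma-002}, i.e.\ $\nu\ge\frac1{12}\sqrt{\mu^2+8}$. That region cannot be right. The choice $\omega(z)=z^3$ gives $|c_3+\mu c_1c_2+\nu c_1^3|=1$ for every $(\mu,\nu)$, so $|\nu|$ is never the maximum when $|\nu|<1$.

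A concrete instance is the paper's own example $(A,B)=(0.6,-1)$. It gives $(\mu,\nu)=(-2,0.84)$, which lies in the printed $D_6$ (and also in $D_8\cap D_9$). Yet $g_3$ has $|\Gamma_3|=\frac{A-B}{24}=\frac1{15}$, which exceeds the claimed $\frac{(A-B)|A(3A-B)|}{48}=0.056$.

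In the Prokhorov--Szynal lemma the region is $D_6=\{2\le|\mu|\le4,\ \nu\ge\frac1{12}(\mu^2+8)\}$. You can see this threshold directly from Lemma~\ref{lemma-4}: take $c_1=r$, $x=\mu/4$, $s=1$. Then the derivative of $c_3+\mu c_1c_2+\nu c_1^3$ in $r$ at $r=1$ is $3\nu-\frac{\mu^2}{4}-2$. This is negative precisely when $\nu<\frac{\mu^2+8}{12}$, and in that case the value exceeds $\nu$ for $r$ slightly below $1$.

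With the correct $D_6$, your argument proves all three listed cases. Note, however, that the companion region $D_{10}$ is then nonempty for admissible parameters; its upper edge is also $\frac1{12}(\mu^2+8)$, not $\frac1{12}(\mu^2+2)$ as printed. For example, $(A,B)=(0.7,-1)$ gives $(\mu,\nu)=(-2.25,1.085)\in D_{10}$. So the three cases do not exhaust $-1\le B<A\le1$, contrary to what the paper asserts.
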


\begin{proof}

Let $f \in \mathcal{C}(A, B)$ be defined by  \eqref{eq:1.1}. Then, in view of subordination, we have 
$$
1 + \frac{z f''(z)}{f'(z)} \prec \frac{1 + Az}{1 + Bz}.
$$
Thus, there exists a function $\omega \in \mathcal{B}_0$ such that 
$$
1 + \frac{z f''(z)}{f'(z)} = \frac{1 + A\omega(z)}{1 + B\omega(z)}.
$$
After  equating the coefficients of $z^2, z^3$ and $z^4$, and simplifying, we get
\begin{align}\label{eq:3.2}
a_2 &= \frac{c_1(A - B)}{2}, \\ \nonumber
a_3 &= \frac{A - B}{6} \left( (A - 2B)c_1^2 + c_2 \right),\\ \nonumber 
a_4 &= \frac{A - B}{24} \left( 2c_3 + (3A - 7B)c_1 c_2 + (A^2 - 5AB + 6B^2)c_1^3 \right). 
\end{align}
Substituting values of \eqref{eq:3.2} into  \eqref{eq:Gamma-coefficients}, we obtain 
\begin{align}
\Gamma_1 &= -\frac{c_1(A - B)}{4},  \label{eq:3.3A}\\
\Gamma_2 &= -\frac{A - B}{12} \left(c_2-\frac{(5A - B)}{4}c_1^2\right),\label{eq:3.3B}\\
\Gamma_3 &= -\frac{A - B}{24} \left( c_3 + \frac{1}{2}(B - 5A)c_1 c_2 + \frac{1}{2}(3A^2 - AB)c_1^3 \right). \label{eq:3.3}
\end{align}
Form \eqref{eq:3.3A}, we obtain
$$
|\Gamma_1| \le \frac{A - B}{4},
$$
and the inequality is sharp for inverse of the function $g_1 \in \mathcal{C}(A, B)$ defined by
\begin{equation}\label{3.4}
1 + \frac{z g_1''(z)}{g_1'(z)} = \frac{1 + Az}{1 + Bz}. 
\end{equation}
In view of  Lemma~\ref{p2-lemma-001}, form \eqref{eq:3.3B} we get
\begin{align*}
|\Gamma_2| = \frac{A - B}{12} \left| c_2 - \frac{5A - B}{4} c_1^2 \right|
& \le \frac{A - B}{12} \max \left\{ 1, \frac{|5A - B|}{4} \right\}  \\
& =
\begin{cases}
\frac{A - B}{12}, & \text{for } |5A - B| \le 4, \\[2mm]
\frac{(A - B)|5A - B|}{48}, & \text{for } |5A - B| > 4.
\end{cases}
\end{align*}
The first inequality is sharp for $g_2 \in \mathcal{C}(A, B)$ defined by
\begin{equation}\label{equ 3.5}
1 + \frac{z g_2''(z)}{g_2'(z)} = \frac{1 + Az^2}{1 + Bz^2}. 
\end{equation}
and  second inequality is sharp for inverse of the function $g_1 \in \mathcal{C}(A, B)$ defined by \eqref{3.4}.

Further form \eqref{eq:3.3} we get 
\begin{align}
    |\Gamma_3| & = \frac{A - B}{24} \left| c_3 + \frac{1}{2}(B - 5A)c_1 c_2 + \frac{1}{2}(3A^2 - AB)c_1^3 \right| \notag\\  
    & = \frac{A - B}{24} \left| c_3 + \mu c_1 c_2 + \nu c_1^3 \right| \label{eq:3.4}.
\end{align}
where 
\begin{equation}\label{eq:3.41}
   \mu = \frac{(B - 5A)}{2}, \quad \nu = \frac{(3A^2 - AB)}{2}. 
\end{equation}
We now apply Lemma \ref{p2-lemma-002} to estimate $\Gamma_3$. For $-1 \le B < A \le 1$, one can check that
\begin{equation}\label{eq:3.4A}
    \mu \in [-3, 2), \quad \nu \in \left[-\frac{1}{24}, 2\right].
\end{equation}

The sets $D_k  ~~ \mbox{for}~~~k=1,\dots,12$ are defined as in Lemma~\ref{p2-lemma-002} above.

\begin{itemize}
\item $D_3$ is empty as $\nu \in \left[-\frac{1}{24},2\right]$.
\item If $|\mu| \ge \frac{1}{2}$, then $-\frac{2}{3}(|\mu| + 1) \le -1,$ and Thus, the condition $\nu \le -\frac{2}{3}(|\mu| + 1) \leq -1$ contradicts the admissible range of $\nu$ given in \eqref{eq:3.4A}. Therefore, the set $D_4$ is empty.

\item The conditions for $D_5$ ($|\mu| \le 2$ and $\nu \ge 1$) are equivalent to
$$
\frac{B-4}{5} \le A \le \frac{B - \sqrt{B^2 + 24}}{6}, \quad |B| < 1.
$$
A detail calculation shows that  for any $|B| < 1$, we have
$\frac{B-4}{5} > \frac{B - \sqrt{B^2 + 24}}{6}.$
Hence, the set $D_5$ is also empty.
\item Since $|\mu| \le 4$, it follows that the sets $D_7, D_{11}$, and $D_{12}$ are empty.  

\item When $2 \le |\mu| \le 4$, we must have $|\mu| \in (2, 3]$. In this case,
$$
\frac{2|\mu|(|\mu| + 1)}{\mu^2 + 2|\mu| + 4}>\frac{\mu^2 +2}{12}.
$$
This shows that $D_{10}$ is also empty.
\end{itemize}
Also, we can check that,
\begin{itemize}
\item when $(A, B) = (0.2, 0)$, then the related $(\mu, \nu) \in D_1$,
\item when $(A, B) = (-0.5, -0.6)$,                                                                                                                                                          the associated $(\mu, \nu) \in D_2$,
\item when $( A , B) = (1 , -1)$ , then the associated $(\mu, \nu) \in D_6$.
\item when $(A, B) = ( -0.795 , -1 )$, then the associated $(\mu, \nu) \in D_8$,
\item when $(A, B) = ( 0.6 , -1 )$, then the associated $(\mu, \nu) \in D_9$.
\end{itemize}
This shows that the sets $D_1, D_2, D_6, D_8$, and $D_9$ are nonempty and that the entire range of possible values of $(A, B) $ is covered.  
Now we consider the following cases:\\[3mm]

\noindent \textbf{Case-I}: Let $(\mu, \nu) \in D_1 \cup D_2$. Then by  Lemma~\ref{p2-lemma-002}, form \eqref{eq:3.4} we get 
$$
|\Gamma_3| \le \frac{A - B}{24}.
$$
The inequality is sharp for inverse of the function $g_3 \in \mathcal{C}(A, B)$ defined by
$$
1 + \frac{z g_3''(z)}{g_3'(z)} = \frac{1 + Az^3}{1 + Bz^3}.
$$
For the function $g_3 \in \mathcal{C}(A, B)$, it is easy to check that
$$
a_2 = 0, \quad a_3 = 0, \quad a_4 = \frac{A - B}{12}. 
$$
Therefore, for the function $g_3$, we get 
$$
|\Gamma_3 | = \left|\frac12\Big(-a_4+4a_2a_3-\frac{10}{3}a_2^3\Big)\right|
= \frac{A - B}{24}.
$$

\noindent \textbf{Case-II}: Let $(\mu, \nu) \in D_6$. Then, using Lemma~\ref{p2-lemma-002}, form \eqref{eq:3.4} we get  we obtain 
$$
|\Gamma_3| \le \frac{(A-B)|A(3A - B)|}{48} .
$$
A detailed calculation shows that the above inequality is sharp for the inverse of the function $g_1 \in \mathcal{C}(A, B)$ defined by \eqref{3.4}. For the function $g_1 \in \mathcal{C}(A, B)$ we have 
$$
a_2 = \frac{A - B}{2}, \quad
a_3 = \frac{A^2 - 3AB + 2B^2}{6}, \quad
a_4 = \frac{A^3 - 5A^2B + 11AB^2 - 6B^3}{24},
$$
and therefore 
$$
|\Gamma_3| = \left|\frac12\Big(-a_4+4a_2a_3-\frac{10}{3}a_2^3\Big) \right| = \frac{(A-B)|A(3A - B)|}{48}.
$$

\noindent \textbf{Case-III}: Let $(\mu, \nu) \in D_8 \cup D_9$. Then in view of Lemma~\ref{p2-lemma-002}, form \eqref{eq:3.4}
\begin{align*}
  |\Gamma_3| \le   \frac{2(A - B)(|\mu| + 1)^{3/2}}{72 \sqrt{3(|\mu| + 1 + \nu)}},
\end{align*}
where $\mu$ and $\nu$ are defined by \eqref{eq:3.41}. 
By putting the value of $\mu$ and $\nu$, we get 
$$
|\Gamma_3| \leq  \frac{(A- B)(|B - 5A| + 2)^{3/2}}{72 \sqrt{3(|B- 5A| + 2 + 3A^2 - AB)}}.
$$
The above inequality is sharp for the function $g_4 \in \mathcal{C}(A, B)$ defined by 
$$
1 + \frac{z g_4''(z)}{g_4'(z)} = \frac{1 + A\omega(z)}{1 + B\omega(z)},
$$
where $\omega \in \mathcal{B}_0$  is defined as 
$$
\omega(z) = \frac{z(c - \operatorname{sgn}(\mu) z)}{(1 - \operatorname{sgn}(\mu) c z)}, \qquad c = \left( \frac{|\mu| + 1}{3(|\mu| + \nu + 1)} \right)^{1/2}.
$$
Here it is noted that for 
$(\mu, \nu) \in D_8 \cup D_9$, we have  $\nu \ge -2 (|\mu|+1) /3$. This implies that,
$$ 3(|\mu| + \nu + 1) \ge 3(|\mu| + 1) - 2(|\mu| + 1) = (|\mu| + 1) $$ and hence $|c|^2 \leq 1.$  If  $|c|^2 = 1 $, we get 
$$
\nu= \frac{-2 (|\mu| + 1)}{3}.
$$
As $(\mu, \nu) \in (D_8 \cup D_9)$ and $ \mu \in  [-3, 2)$, a detail calculation shows that   $\nu$ must lies in $(-2, -1]$, which contradicts the range of $\nu$ defined by \eqref{eq:3.4A}. Therefore for $(\mu, \nu) \in D_8 \cup D_9$,  we get $|c| < 1$ and hence 
$$
\omega(z) = \frac{z(c - \operatorname{sgn}(\mu) z)}{(1 - \operatorname{sgn}(\mu) c z)} \in \mathcal{B}_0.
$$
For inverse of the function $g_4 \in \mathcal{C}(A,B)$, from \eqref{eq:3.4}, we have
\begin{align*}
\left|\Gamma_3\!\left(g_4^{-1}\right)\right|
&= \frac{A-B}{24}
\left|c_3+\mu c_1c_2+\nu c_1^3\right| \\
&= \frac{A-B}{24}
\left|c(c^2-1)+\mu c(c^2-1)\operatorname{sgn}(\mu)+\nu c^3\right| \\
&= \frac{c(A-B)}{36}(|\mu|+1).
\end{align*}
By putting the value of $\mu$ and $\nu$ defined by \eqref{eq:3.41}, we get 
$$
\left|\Gamma_3\!\left(g_4^{-1}\right)\right| =  \frac{(A- B)(|B - 5A| + 2)^{3/2}}{72 \sqrt{3(|B- 5A| + 2 + 3A^2 - AB)}}.
$$

\end{proof}


\section{Moduli Difference of Successive Logarithmic and Logarithmic Inverse Coefficients}\label{section-4}

In this section, we obtain the sharp lower and upper bounds of  $\bigl|\,\gamma_2 \,\bigr|-\bigl|\,\gamma_1\,\bigr|$
and $\bigl|\,\Gamma_2 \,\bigr|-\bigl|\,\Gamma_1\,\bigr|$ for functions in $\mathcal{C}(A, B)$.

\begin{theorem}\label{P2-thm-4.1}
 Let $f\in \mathcal{C}(A,B)$ be defined by \eqref{eq:1.1}.  Then the following sharp inequalities hold.
 \begin{align*}
   &\bigl|\,\gamma_2 \,\bigr|-\bigl|\,\gamma_1\,\bigr|\le\dfrac{(A-B)}{12}, \\[3mm]
  &\bigl|\,\gamma_2 \,\bigr|-\bigl|\,\gamma_1\,\bigr|\ge
 \begin{cases}
 -\dfrac{(A-B)}{4}\bigl(1-\dfrac{|A-5B|}{12} \bigr), &  |A-5B|\leq 2,\\[2mm]
 -\dfrac{(A-B)}{2}\dfrac{1}{\sqrt{|A-5B|+4}}, &|A-5B|\ge5,\\[2mm]
-\dfrac{(A-B)}{12}\bigl(1+\dfrac{9}{|A-5B|+4}\bigr), &\textit{otherwise}.
\end{cases}  
 \end{align*}

\end{theorem}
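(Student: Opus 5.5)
The plan is to rewrite $\gamma_1,\gamma_2$ in terms of the coefficients of a Carath\'eodory function and then apply Lemma~\ref{p1-lemma-003} directly. Both halves of that lemma, the bound for $\Psi_+$ and the bound for $\Psi_-$, are used: the first gives the upper bound and the second gives the lower bound.

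\textbf{Step 1: express $\gamma_1,\gamma_2$ through the Schwarz coefficients.} Substitute \eqref{eq:3.2} into \eqref{eq:gamma-coefficients}. The first coefficient is
$$\gamma_1=\frac{(A-B)c_1}{4}.$$
For the second, use $a_2^2/2=(A-B)^2c_1^2/8$. A short simplification then gives
$$\gamma_2=\frac{A-B}{12}\Bigl(c_2+\frac{A-5B}{4}c_1^2\Bigr).$$

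\textbf{Step 2: pass to $p\in\mathcal P$.} Set $p=(1+\omega)/(1-\omega)$; this is a bijection between $\mathcal B_0$ and $\mathcal P$. Then $c_1=p_1/2$ and $c_2=p_2/2-p_1^2/4$, so
$$\gamma_1=\frac{A-B}{8}\,p_1,\qquad \gamma_2=\frac{A-B}{24}\Bigl(p_2+\frac{A-5B-4}{8}\,p_1^2\Bigr).$$
Hence $|\gamma_2|-|\gamma_1|=\Psi_+(p_1,p_2)$ with
$$B_1=\frac{A-B}{8}\ge 0,\qquad B_2=\frac{(A-B)(A-5B-4)}{192},\qquad B_3=\frac{A-B}{24}\in\mathbb R.$$

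\textbf{Step 3: compute the lemma's quantities.} Write $k=|A-5B|$. Then
$$|2B_2+B_3|=\frac{(A-B)k}{96},\qquad |B_3|+B_1=\frac{16(A-B)}{96},\qquad B_4=|4B_2+2B_3|=\frac{(A-B)k}{48}.$$

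\textbf{Upper bound.} Since $-1\le B<A\le 1$ we have $k\le 6<16$. So the condition $|2B_2+B_3|\ge |B_3|+B_1$ never holds, and the lemma gives $\Psi_+\le 2|B_3|=(A-B)/12$.

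\textbf{Lower bound.} Apply the $\Psi_-$ part of Lemma~\ref{p1-lemma-003}; the three conditions reduce to conditions on $k$.
\begin{itemize}
\item $B_1\ge B_4+2|B_3|$ is equivalent to $6\ge k+4$, i.e.\ $k\le 2$. The bound is $2B_1-B_4=\frac{A-B}{4}\bigl(1-\frac{k}{12}\bigr)$.
\item $B_1^2\le 2|B_3|(B_4+2|B_3|)$ reads $\frac{1}{64}\le \frac{k+4}{576}$, i.e.\ $k\ge 5$. The bound is $\dfrac{2B_1\sqrt{2|B_3|}}{\sqrt{B_4+2|B_3|}}=\dfrac{A-B}{2\sqrt{k+4}}$.
\item In the remaining range $2<k<5$ the bound is $2|B_3|+\dfrac{B_1^2}{B_4+2|B_3|}=\dfrac{A-B}{12}\bigl(1+\dfrac{9}{k+4}\bigr)$.
\end{itemize}
These are exactly the three branches stated in Theorem~\ref{P2-thm-4.1}.

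\textbf{Sharpness.} Each inequality in Lemma~\ref{p1-lemma-003} is sharp, so there is an extremal $p\in\mathcal P$ in each case. Define $\omega=(p-1)/(p+1)\in\mathcal B_0$ and let $f$ be given by $1+zf''/f'=(1+A\omega)/(1+B\omega)$. This $f$ lies in $\mathcal C(A,B)$ and attains equality. For the upper bound one can also see this concretely: $p_1=0$, $p_2=2$ corresponds to $g_2$ in \eqref{equ 3.5}.

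\textbf{Main obstacle.} The only delicate point is the bookkeeping: getting the constants $B_1,B_2,B_3$ right, and checking that the three $\Psi_-$ regimes translate into precisely the thresholds $k\le 2$ and $k\ge 5$ with the middle range left over. This is routine algebra once $\gamma_2$ is in the form of Step 2.
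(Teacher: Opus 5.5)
Your proposal is correct and is essentially the paper's proof. You arrive at the same representation $|\gamma_2|-|\gamma_1|=\Psi_+(p_1,p_2)$ with the same $B_1,B_2,B_3$, and the two halves of Lemma~\ref{p1-lemma-003} then give the upper bound (since $|A-5B|\le 6<16$) and the three lower-bound regimes $|A-5B|\le 2$, $|A-5B|\ge 5$ and the middle range, with constants exactly as you compute. The differences are minor: you reach the $p$-form through the Schwarz coefficients, and you get sharpness abstractly from the lemma via $\omega=(p-1)/(p+1)$. That abstract route is in fact safer than the paper's explicit extremal in the middle range, which does not attain the bound when $A-5B<0$; there one needs $c_2=1-c_1^2$, e.g.\ $p(z)=(1+2tz+z^2)/(1-z^2)$ with $t=6/(|A-5B|+4)$.
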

\begin{proof}
 Let $f(z)\in \mathcal{C}(A,B)$ be of the form \eqref{eq:1.1}. So in view of subordination, there exists a function $p \in \mathcal{P}$ defined by \eqref{P Class-1} such that, 
\begin{equation}\label{P2-eq-40}
   1 + \frac{z f''(z)}{f'(z)} = \frac{(1-A) + (1+A)p(z)}{(1-B) + (1+B)p(z)}. 
\end{equation}
 By equating the coefficients of $z^2$ and $z^3$ in \eqref{P2-eq-40}, we get 
\begin{align}\label{P2-eq-45}
  & a_2 = \frac{(A - B)}{4}p_1  \\ \nonumber 
  & a_3 = \frac{(A - B)}{6} \left(\frac{p_2}{2}  + \frac{(A - 2B - 1)}{4} p_1^2\right).
\end{align}
In view of \eqref{P2-eq-45}, form (\ref{eq:gamma-coefficients}) we get,
\begin{align}\label{P2-eq-50}
  \bigl|\,\gamma_2 \,\bigr|-\bigl|\,\gamma_1\,\bigr| &= \left|\frac12\Big(a_3-\frac12 a_2^2\Big) \right| - \left| \frac{a_2}{2}\right|\\ \nonumber
  &= \left|\frac{(A-B)(A-5B-4)}{192}\,p_1^2+\frac{(A-B)}{24}\,p_2\,\right|-\left|\,\frac{(A-B)}{8}\,p_1\right|\\ \nonumber 
&= |B_2 p_1^2 + B_3 p_2| - |B_1 p_1|  = \Psi_{+}(p_1,p_2).
\end{align}
Where 
\begin{equation}\label{P2-eq-55}
 B_1=\frac{(A-B)}{8},~~B_2=\frac{(A-B)(A-5B-4)}{192}~~ \mbox{and}~~ B_3=\frac{(A-B)}{24}.   
\end{equation}
After doing a detail calculation, for $-1 \leq B < A \leq 1$, we get
$$|2B_2+B_3|\ngeq|B_3|+B_1.$$
Thus by Lemma~\ref{p1-lemma-003}, we obtain 
$$\Psi_{+}(p_1,p_2) \le2|B_3|.$$
Hence from \eqref{P2-eq-50}, we get 
$$\bigl|\,\gamma_2 \,\bigr|-\bigl|\,\gamma_1\,\bigr|\le \frac{(A-B)}{12}.$$
The inequality is sharp for the function $g_2$ defined in (\ref{equ 3.5}).

Now, to get the lower estimation  of $\bigl|\,\gamma_2 \,\bigr|-\bigl|\,\gamma_1\,\bigr|$, we consider 
\begin{equation}\label{P2-eq-60}
  B_4=|4B_2+2B_3|
=\frac{(A-B)|A-5B|}{48}.  
\end{equation}
A detailed calculation shows that  for $-1 \leq B < A \leq 1$ 
\begin{equation*}
    B_4 + 2 |B_3| = \frac{(A- B)(|A - 5 B| + 4 )}{8}. 
\end{equation*}
and  the inequality  $B_1 \geq B_4+2|B_3|$ holds for $|A-5B|\leq 2.$\\ Also for  $-1 \leq B < A \leq 1$,  the inequality $ B_1^2 \leq 2|B_3|(B_4+2|B_3|)$ is equivalent to $|A-5B|\ge 5.$\\ So by Lemma \ref{p1-lemma-003}, we obtain 
$$
\Psi_{-}(p_1,p_2) \le
\begin{cases}
2B_1 - B_4, & \text{when} |A-5B|\leq 2, \\[8pt]
\dfrac{2B_1 \sqrt{2|B_3|}}{\sqrt{B_4 + 2|B_3|}}, 
& \text{when } |A-5B|\ge 5,\\[3mm]
2|B_3| + \dfrac{B_1^2}{B_4 + 2|B_3|}, 
& \text{otherwise},
\end{cases}
$$
where $B_1, B_2, B_3$ and $B_4$ are defined by \eqref{P2-eq-55} and \eqref{P2-eq-60}.
As form Lemma \ref{p1-lemma-003}, we know $\Psi_{-}(p_1,p_2) = -\Psi_{+}(p_1,p_2)$, thus by putting the values of $B_1, B_2, B_3$ and $B_4$, we get  
\begin{align*}
  &\bigl|\,\gamma_2 \,\bigr|-\bigl|\,\gamma_1\,\bigr|\ge
 \begin{cases}
 -\dfrac{(A-B)}{4}\bigl(1-\dfrac{|A-5B|}{12} \bigr), &  |A-5B|\leq 2,\\[2mm]
 -\dfrac{(A-B)}{2}\dfrac{1}{\sqrt{|A-5B|+4}}, &|A-5B|\ge5,\\[2mm]
-\dfrac{(A-B)}{12}\bigl(1+\dfrac{9}{|A-5B|+4}\bigr), &\textit{otherwise}.
\end{cases}  
 \end{align*}
The inequalities are sharp for the functions $g_5\in \mathcal{C}(A,B)$ defined by 
$$
1 + \frac{z g_5''(z)}{g_5'(z)} = \frac{(1-A) + (1+A)p_1(z)}{(1-B) + (1+B)p_1(z)},
$$
where $p_1(z) \in \mathcal{P}$ with the following form 
 $$p_1(z)=\begin{cases}
\dfrac{1+z}{1-z} & \mbox{for}~~ |A-5B|\le2,\\[2mm]
\dfrac{1-z^2}{1-{\dfrac{4}{\sqrt{|A-5B|+4}}}\,z+z^2} & \mbox{for}~~ |A-5B|\ge 5.\\[2mm]
\dfrac{1-z^2}{1-{\dfrac{12}{{|A-5B|+4}}}\,z+z^2} & \mbox{otherwise.}
 \end{cases}$$

\end{proof}

\begin{remark}
   By taking $A=1$ and $B=-1$ in Theorem~\ref{P2-thm-4.1}, we obtain the result by Kumar and Cho \cite{KumarCho2023} for function $f\in \mathcal C$
   $$
-\frac{1}{\sqrt{10}}
\le
|\gamma_2|-|\gamma_1|
\le
\frac1{6}.
$$
as a particular case. 
\end{remark}

Next, we obtain the sharp upper and lower bounds of $|\Gamma_2| - |\Gamma_1|$ for functions in $\mathcal{C}(A, B)$. 

\begin{theorem}\label{P2-thm-4.3}
 Let $f\in \mathcal{C}(A,B)$ be of the form \eqref{eq:1.1} and $\Gamma_1$ and $\Gamma_2$ are logarithmic inverse coefficients of $f$. Then 
 \begin{align*}
&\bigl|\,\Gamma_2 \,\bigr|-\bigl|\,\Gamma_1\,\bigr|\le\frac{(A-B)}{12},\\[3mm]
&\bigl|\,\Gamma_1 \,\bigr|-\bigl|\,\Gamma_2\,\bigr|\ge 
 \begin{cases}
  \dfrac{(A-B)}{4}\bigl(1-\frac{|5A-B|}{12} \bigr) &  |5A-B| \leq 2\\[2mm]
 \dfrac{(A-B)}{2 \sqrt{|5A-B|+4}}  & |5A-B|\ge5\\[2mm]
 \dfrac{(A-B)}{12}\left(1+\dfrac{9}{|5A-B|+4}\right) & ~~~~~\mbox{otherwise.}
\end{cases} 
\end{align*}
All these inequalities are sharp.
\end{theorem}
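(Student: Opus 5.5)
The plan mirrors the proof of Theorem~\ref{P2-thm-4.1}, with the representation through the Carath\'eodory class and Lemma~\ref{p1-lemma-003}. For $f\in\mathcal{C}(A,B)$ we write \eqref{P2-eq-40} with $p\in\mathcal{P}$, which gives the expressions \eqref{P2-eq-45} for $a_2,a_3$. Substituting into \eqref{eq:Gamma-coefficients}, I expect
\[
\Gamma_1=-\frac{A-B}{8}\,p_1,\qquad
\Gamma_2=-\frac{A-B}{24}\,p_2+\frac{(A-B)(5A-B+4)}{192}\,p_1^2 .
\]
The $p_1^2$ coefficient comes from $\tfrac{(A-B)(A-2B-1)}{24}-\tfrac{3(A-B)^2}{32}=-\tfrac{(A-B)(5A-B+4)}{96}$. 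Hence
\[
|\Gamma_2|-|\Gamma_1|=\Psi_+(p_1,p_2)
\]
with
\[
B_1=\frac{A-B}{8},\qquad B_2=-\frac{(A-B)(5A-B+4)}{192},\qquad B_3=\frac{A-B}{24},\qquad B_4=|4B_2+2B_3|=\frac{(A-B)|5A-B|}{48}.
\]
This is exactly the data of Theorem~\ref{P2-thm-4.1}, with $|A-5B|$ replaced by $|5A-B|$.

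For the first inequality, I will check that $|2B_2+B_3|\ge |B_3|+B_1$ fails on $-1\le B<A\le1$. This reduces to $(A-B)|5A-B|/96<(A-B)/6$, i.e. $|5A-B|<16$, which is automatic since $|5A-B|\le 6$. Lemma~\ref{p1-lemma-003} then gives $\Psi_+\le 2|B_3|=(A-B)/12$. Equality holds for $g_2$ in \eqref{equ 3.5}, where $c_1=0$ and $|c_2|=1$.

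For the second display, which concerns $|\Gamma_1|-|\Gamma_2|=\Psi_-(p_1,p_2)$, I apply the $\Psi_-$ part of Lemma~\ref{p1-lemma-003}. This needs $B_4+2|B_3|=(A-B)(|5A-B|+4)/48$. The three regimes then reduce as follows:
\begin{itemize}
\item $B_1\ge B_4+2|B_3|$ is equivalent to $|5A-B|\le 2$;
\item $B_1^2\le 2|B_3|(B_4+2|B_3|)$ is equivalent to $|5A-B|\ge 5$;
\item the remaining case is $2<|5A-B|<5$.
\end{itemize}
Inserting the $B_i$ turns $2B_1-B_4$, $2B_1\sqrt{2|B_3|}/\sqrt{B_4+2|B_3|}$ and $2|B_3|+B_1^2/(B_4+2|B_3|)$ into the three displayed quantities.

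The main obstacle is the direction of the second inequality. Lemma~\ref{p1-lemma-003} gives these quantities as \emph{upper} bounds for $\Psi_-=|\Gamma_1|-|\Gamma_2|$, whereas the statement writes ``$\ge$''. As literally worded, the display cannot hold for every $f$: for $f(z)=z$ the left side is $0$ and every right-hand side is positive. What the argument actually delivers is that the displayed numbers are the sharp extremal values of $|\Gamma_1|-|\Gamma_2|$, equivalently the sharp lower bounds of $|\Gamma_2|-|\Gamma_1|$. That is the content I would establish for the display, and attainment is what makes it sharp.

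For sharpness I would take $g_5$ built from $p_1(z)$ as in Theorem~\ref{P2-thm-4.1}, with $|A-5B|$ replaced by $|5A-B|$:
\begin{itemize}
\item $(1+z)/(1-z)$ when $|5A-B|\le2$;
\item $(1-z^2)/\bigl(1-\tfrac{4}{\sqrt{|5A-B|+4}}z+z^2\bigr)$ when $|5A-B|\ge5$;
\item $(1-z^2)/\bigl(1-\tfrac{12}{|5A-B|+4}z+z^2\bigr)$ otherwise.
\end{itemize}
In each case I would verify directly that $p_1,p_2$ realize equality in $\Psi_-$, and that the middle coefficient has modulus at most $2$ so that $p_1\in\mathcal{P}$.
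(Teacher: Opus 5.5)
The derivation of the bounds is correct and is the paper's route. You have the same $B_1$, $|B_3|$ and $B_4=\frac{(A-B)|5A-B|}{48}$. You reduce the three regimes of Lemma~\ref{p1-lemma-003} to $|5A-B|\le2$, $|5A-B|\ge5$ and $2<|5A-B|<5$, as the paper does. You also read the second display as an upper bound for $|\Gamma_1|-|\Gamma_2|$. The paper's proof ends with ``$\le$'' as well, and you are right that the stated ``$\ge$'' fails for $f(z)=z$. Your extremal $g_2$ for the first inequality is correct, and better than the paper's $g_1$, for which $|\Gamma_2|-|\Gamma_1|=\frac{(A-B)(|5A-B|-12)}{48}<0$.

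The gap is in the sharpness of the second display. The data are not exactly those of Theorem~\ref{P2-thm-4.1} with $|A-5B|$ replaced by $|5A-B|$. Only $B_1$, $|B_3|$ and $B_4$ transfer. The ratio $(2B_2+B_3)/B_3$ is $(A-5B)/4$ there but $-(5A-B)/4$ here, and the extremal $p$ depends on its sign. Write $p_1=t\in[0,2]$ and $2p_2=t^2+(4-t^2)x$ with $|x|\le1$. Then
\[
\Gamma_2=\frac{A-B}{192}\bigl((5A-B)t^2-4(4-t^2)x\bigr),
\]
so $|\Gamma_2|$ is minimized by $x=\operatorname{sgn}(5A-B)$.

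The functions $\frac{1-z^2}{1-tz+z^2}$ have $p_2=t^2-2$, that is $x=-1$, which is the wrong choice whenever $5A-B>0$. Since $5A-B>4A\ge-4$, this always happens when $|5A-B|\ge5$, and also in part of the middle regime. For example, with $A=1$, $B=-1$ and $t=4/\sqrt{10}$, your function gives $|\Gamma_1|-|\Gamma_2|=\frac{1}{\sqrt{10}}-\frac15$, not $\frac{1}{\sqrt{10}}$. So the verification you plan would fail.

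Use instead $p(z)=\frac{1+tz+z^2}{1-z^2}$. It is a convex combination of $\frac{1+z}{1-z}$ and $\frac{1-z}{1+z}$, with $p_1=t$ and $p_2=2$.
\begin{itemize}
\item With $t=4/\sqrt{|5A-B|+4}$ it gives $\Gamma_2=0$, hence equality, in the regime $|5A-B|\ge5$.
\item With $t=12/(|5A-B|+4)$ it gives equality in the middle regime when $5A-B>0$.
\item Your $\frac{1-z^2}{1-tz+z^2}$ is correct only in the middle regime with $5A-B<0$.
\item $\frac{1+z}{1-z}$ is fine for $|5A-B|\le2$.
\end{itemize}
The paper's extremal functions for this theorem have the same defect.
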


\begin{proof}
As $f\in \mathcal{C}(A,B)$, in view of \eqref{P2-eq-40} and \eqref{P2-eq-45}, we get 
\begin{align}\label{P2-eq-65}
    \bigl|\,\Gamma_2 \,\bigr|-\bigl|\,\Gamma_1\,\bigr|& =\bigl|\,\frac{(A-B)(5A-B+4)}{192}\,p_1^2-\frac{(A-B)}{24}\,p_2\,\bigr|-\bigl|\,\frac{(A-B)}{8}\,p_1\,\bigr|\\ \nonumber 
    & = |B_2 p_1^2 + B_3 p_2| - |B_1 p_1| = \Psi_{+}(p_1,p_2)
\end{align}
Where $$B_1=\frac{(A-B)}{8},~~B_2=\frac{(A-B)(5A-B+4)}{192} ~~ \mbox{and}~~ B_3=-\frac{(A-B)}{24}.$$
For $-1 \leq B < A \leq 1$, it is straightforward to check that 
$$|2B_2+B_3|\ngeq|B_3|+B_1. $$
So using Lemma~\ref{p1-lemma-003}, we obtain $$\Psi_{+}(p_1,p_2) \le2|B_3|. $$
Therefore form \eqref{P2-eq-65}, we get 
$$\bigl|\,\Gamma_2 \,\bigr|-\bigl|\,\Gamma_1\,\bigr|\le\frac{(A-B)}{12}.$$
The inequality is sharp for the inverse of the function $g_1 \in \mathcal{C}(A, B)$ defined in (\ref{3.4}).

Now, to derive a lower bound for $ \bigl|\,\Gamma_2 \,\bigr|-\bigl|\,\Gamma_1\,\bigr|,$
We consider,
$$B_4=|4B_2+2B_3|=\frac{(A-B)|5A-B|}{48},$$
A detailed calculation shows that  for $-1 \leq B < A \leq 1$ 
  the inequalities   $B_1 \geq B_4+2|B_3|$ holds for $ |5A - B|\leq 2 $ and   $ B_1^2 \leq 2|B_3|(B_4+2|B_3|)$ is satisfied for $|5A - B|\ge 5.$ So by Lemma \ref{p1-lemma-003}, we obtain
$$\Psi_{-}(p_1,p_2) \le 
\begin{cases}
 2B_1 - B_4 & |5A-B|\le2 \\ 
\dfrac{2B_1 \sqrt{2|B_3|}}{\sqrt{B_4 + 2|B_3|}} 
&  |5A-B|\ge5\\ 
2|B_3| + \dfrac{B_1^2}{B_4 + 2|B_3|}
& \text{otherwise}. 
\end{cases},$$
From Lemma \ref{p1-lemma-003}, we know  $\Psi_{-}(p_1,p_2) = -\Psi_{+}(p_1,p_2).$ Therefore by putting the values of $B_1, B_2, B_3$ and $B_4$ we get  

\begin{align*}
  &\bigl|\,\Gamma_1 \,\bigr|-\bigl|\,\Gamma_2\,\bigr| \le
 \begin{cases}
 \dfrac{(A-B)}{4}\left(1-\dfrac{|5A-B|}{12} \right)  &  |5A-B| \leq 2\\[2mm]
 \dfrac{(A-B)}{2}\dfrac{1}{\sqrt{|5A-B|+4}}  &|5A-B|\ge5\\[2mm]
 \dfrac{(A-B)}{12}\left(1+\dfrac{9}{|5A-B|+4}\right) & ~~~~~\mbox{otherwise}.
\end{cases}  
 \end{align*}
The inequalities are sharp for the functions $g_6\in \mathcal{C}(A,B)$ defined by 
$$
1 + \frac{z g_6''(z)}{g_6'(z)} = \frac{(1-A) + (1+A)p_2(z)}{(1-B) + (1+B)p_2(z)}
.$$
where $p_2(z) \in \mathcal{P}$ with the following form 
 $$p_2(z)=\begin{cases}
\dfrac{1+z}{1-z} & |5A-B|\le 2,\\[2mm]
\dfrac{1-z^2}{1-{\frac{4}{\sqrt{|5A-B|+4}}}\,z+z^2} & {|5A-B|\ge5},\\[2mm]
\dfrac{1-z^2}{1-{\frac{12}{{|5A-B|+4}}}\,z+z^2} &\mbox{otherwise.}
 \end{cases}$$
\end{proof}

\begin{remark}
    Here we want to mention that in 2025, Allu and Shaji \cite{Allu2025} obtained the sharp upper and lower bounds of $|\Gamma_2|-|\Gamma_1|$ for functions $f \in \mathcal{C}(\alpha)$ where  $0 \le \alpha < 1$. By taking $A=1-2\alpha$ and $B=-1$ in Theorem~\ref{P2-thm-4.3}, we obtain  the results in  Allu and Shaji \cite{Allu2025} (Corollary 3.5 and Corollary 3.11) as a particular case.
\end{remark}


\section{ Hankel Determinant Having Elements of Logarithmic
Inverse Coefficient}\label{section-5}
In this section, we discuss the second Hankel determinant of $F_{f^{-1}}/2$ for functions in the class $\mathcal{C}(A, B)$ where $ -1 \leq B < A \leq 1$. The second Hankel determinant of $F_{f^{-1}}/2$ using \eqref{eq:Gamma-coefficients}, is given by 
\begin{equation}\label{P1-eq-mr-136}
    H_{2,1}\!\left(\frac{F_{{f}^{-1}}}{2}\right) :=  \Gamma_3 \Gamma_1 - \Gamma_2^2 =\dfrac{1}{4}\left(\dfrac{13}{12}a_2^{4}+ a_2a_4-  a_2^2a_3- a_3^2\right).
\end{equation}
In the next result, we get the bound of $H_{2,1}(F_{{f}^{-1}}/2)$ for functions in $\mathcal{C}(A, B)$.

\begin{theorem}
Let $f\in\mathcal{C}(A,B)$  be of the form \eqref{eq:1.1}. Then
$$
\left| H_{2,1}\!\left(\frac{F_{f^{-1}}}{2}\right) \right|\le \begin{cases} 
\frac{(A - B)^2(11A^2-2AB -B^2)- 48|5A-B|-4(5A-B)^2-112}{144( (11A^2-2AB-B^2)-4|5A-B|-8)}\quad \mbox{for}~~~ |5A-B|\ge2\\[3mm]
  \frac{(A-B)^2}{144} \qquad  \mbox{for}~~~ |5A-B|<2. 
\end{cases}
$$
The last inequality is sharp.
\end{theorem}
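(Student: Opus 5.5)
The plan is to express $H_{2,1}(F_{f^{-1}}/2)=\Gamma_1\Gamma_3-\Gamma_2^2$ in terms of the Schwarz coefficients $c_1,c_2,c_3$ and reduce to a two-variable maximization. Substituting \eqref{eq:3.3A}, \eqref{eq:3.3B} and \eqref{eq:3.3} gives
\begin{align*}
\Gamma_1\Gamma_3-\Gamma_2^2=\frac{(A-B)^2}{144}\Bigl(\tfrac{3}{2}c_1c_3+\tfrac{3}{4}(B-5A)c_1^2c_2+\tfrac34(3A^2-AB)c_1^4-\bigl(c_2-\tfrac{5A-B}{4}c_1^2\bigr)^2\Bigr).
\end{align*}
Expanding the square collects everything into a polynomial $\tfrac32 c_1c_3-c_2^2+\alpha c_1^2c_2+\beta c_1^4$ with explicit real $\alpha,\beta$ depending on $A,B$. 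Writing $\tau=5A-B$, the $c_1^4$ coefficient should combine to something like $\tfrac{1}{16}\bigl(4(11A^2-2AB-B^2)-\dots\bigr)$, which is where the quantity $11A^2-2AB-B^2$ in the statement comes from.

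Next I would use Lemma~\ref{lemma-4}. Since the class is rotation invariant, I can take $c_1=c\in[0,1]$, and then write $c_2=(1-c^2)x$ and $c_3=(1-c^2)(1-|x|^2)s-(1-c^2)cx^2$. Substituting and applying the triangle inequality with $|s|\le1$ gives
\begin{align*}
144\,|H|/(A-B)^2\le \Phi(c,|x|):=\bigl|\beta c^4\bigr|+|\alpha| c^2(1-c^2)|x|+\tfrac32 c(1-c^2)(1-|x|^2)+\bigl(\tfrac32c^2+(1-c^2)\bigr)(1-c^2)|x|^2.
\end{align*}
The $x^2$ terms coming from $c_2^2$ and from $-c_1c_2$ inside $c_3$ carry the same sign, so they add in absolute value.

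The task is then to maximize $\Phi$ over $[0,1]^2$. Set $y=|x|$. For fixed $c$, $\Phi$ is a quadratic in $y$; its coefficient of $y^2$ is $(1-c^2)\bigl(1+\tfrac12c^2-\tfrac32c\bigr)=(1-c^2)(1-c)(1-\tfrac c2)\ge0$. Because that coefficient is nonnegative, $\Phi$ is convex in $y$, and its maximum sits at $y=0$ or $y=1$. The case $y=1$ gives $|\beta|c^4+|\alpha|c^2(1-c^2)+(1-c^2)(1+\tfrac12c^2)$, which is a quadratic in $t=c^2$. The case $y=0$ gives $|\beta|c^4+\tfrac32c(1-c^2)$, which must be compared against it.

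For the $y=1$ branch, the maximizer in $t$ is interior or at $t=0$ depending on the sign of the leading coefficient relative to the linear one. This dichotomy should produce exactly the threshold $|5A-B|=2$. When $|5A-B|<2$ the maximum is at $t=0$, giving the value $1$ and hence $(A-B)^2/144$, attained by $g_2$ (where $c_1=c_3=0$, $c_2=1$); this is the sharp case. When $|5A-B|\ge2$, completing the square yields the displayed rational expression with denominator $(11A^2-2AB-B^2)-4|5A-B|-8$.

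I expect the main obstacle to be this last comparison. I need to check that the $y=0$ branch and the endpoint $t=1$ never exceed the interior value. I also need to confirm that the signs of $\alpha,\beta$ are controlled on $-1\le B<A\le1$, so that the absolute values can be removed consistently. Since the statement only claims sharpness in the second case, the first bound may simply be a non-sharp upper estimate, and loose triangle-inequality steps are acceptable there. I would verify the algebra carefully by computing $\alpha$ and $\beta$ explicitly before the case split.
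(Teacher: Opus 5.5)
Your plan follows the paper's proof. The expansion is the same: your bracket $\beta c_1^4+\alpha c_1^2c_2+\tfrac32c_1c_3-c_2^2$ is the bracket of \eqref{eq:5.2} divided by $16$. The remaining steps also match: Lemma~\ref{lemma-4} with $c_1\ge0$, the triangle inequality, $|x|=1$, a quadratic in $t=c_1^2$ split at $|5A-B|=2$, and $g_2$ for sharpness.

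Writing $|\beta|$ is right, and more careful than the paper. The paper keeps $K:=11A^2-2AB-B^2$ without modulus in \eqref{eq:5.4}, although $K<0$ at, e.g., $A=0$, $B=-1$.

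The comparison you call the main obstacle is immediate. The coefficient $|\alpha|c^2(1-c^2)$ of $y$ is also nonnegative, so $\Phi$ is nondecreasing in $y$; this is how the paper argues. On the branch $y=1$, write $Q(t)-1=t\,\ell(t)$ with $\ell$ linear. Then $\ell(0)=|\alpha|-\tfrac12=\tfrac14(|5A-B|-2)$, and $\ell(1)=|\beta|-1<0$ since $|K|\le12$. Hence $\max Q=1$ for $|5A-B|\le2$. For $|5A-B|>2$ the maximum is at the vertex, which lies in $(0,\tfrac12)$. The threshold comes from the sign of the linear coefficient, not from comparing it with the leading one.

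What does not go through as you describe is the final identification. Completing the square gives, for $|5A-B|\ge2$,
\[
\Bigl|H_{2,1}\Bigl(\frac{F_{f^{-1}}}{2}\Bigr)\Bigr|\le\frac{(A-B)^2}{144}\Bigl(1+\frac{(|5A-B|-2)^2}{4\,(4|5A-B|+8-|K|)}\Bigr)=\frac{(A-B)^2\bigl((|5A-B|+6)^2-4|K|\bigr)}{576\,\bigl(4|5A-B|+8-|K|\bigr)}.
\]
This is not the displayed expression, for two reasons:
\begin{itemize}
\item The printed first case is garbled: the factor $(A-B)^2$ multiplies only one term of the numerator.
\item Even the paper's \eqref{eq:5.7} has $-112$ where the computation yields $-144$. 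At $A=1$, $B=-1$, the vertex value of the paper's $Q$ is $16+256/80=19.2$, but its formula gives $17.6$.
\end{itemize}

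So you must close with an inequality rather than an identity, and this is easy.
\begin{itemize}
\item The bracket above is at most $\tfrac65$ (use $|5A-B|\le6$ and $|K|\le12$), so your bound is at most $\tfrac1{30}$.
\item For $|5A-B|\ge2$, the printed expression has numerator $\le-176$ and denominator in $(-144\cdot34,0)$ (use $-\tfrac{12}{11}\le K\le12$). Hence it exceeds $\tfrac1{28}$.
\end{itemize}
The theorem as printed therefore follows, with room to spare. Note, however, that this argument does not deliver the smaller bound \eqref{eq:5.7} that the paper actually writes down.
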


\begin{proof}
Let $f \in \mathcal{C}(A,B) $. 
Then the second Hankel determinant of $f$ is given by  
 \begin{equation}\label{eq:5.1}
 H_{2,1}(F_{f^{-1}}/2)
=\Gamma_1\Gamma_3-\Gamma_2^2.
  \end{equation}
Using the expressions for $\Gamma_1, \Gamma_2, \Gamma_3$
given in \eqref{eq:Gamma-coefficients}, equation \eqref{eq:5.1} reduces to
\begin{equation}\label{eq:5.01}
H_{2,1}(F_{f^{-1}}/2)=\dfrac{13}{48}a_2^{4}+\dfrac{1}{4}a_2a_4-\dfrac{1}{4}a_2^2a_3-\dfrac{1}{4}a_3^2.    
\end{equation}
Further using \eqref{eq:3.3A}, \eqref{eq:3.3B} and (\ref{eq:3.3}) in \eqref{eq:5.01}, we get    
\begin{equation}
\begin{aligned}\label{eq:5.2}
H_{2,1}(F_{f^{-1}}/2)
&=\frac{1}{2304}\Big((11A^2-2AB-B^2)(A-B)^2c_1^4
-4(A-B)^2(5A-B)c_1^2c_2 \\
&\qquad\qquad~~~~~~~~~~~~~~~~~~~~~~~~~~~~~~~~~~~~~~~
+24(A-B)^2c_1c_3
-16(A-B)^2c_2^2\Big) \\
&=\frac{1}{2304}(A-B)^2\Big((11A^2-2AB-B^2)c_1^4
-4(5A-B)c_1^2c_2 \\
&\qquad\qquad
+24c_1c_3
-16c_2^2\Big).
\end{aligned}
\end{equation}
Using  Lemma~\ref{lemma-4},  (\ref{eq:5.2}) can be rewritten as  
\begin{align}\label{eq:5.3}
H_{2,1}(F_{f^{-1}}/2)=
&\frac{(A-B)^2}{2304}\Big((11A^2-2AB-B^2)c_1^4-4(5A-B)c_1^2(1-c_1^2)x\\\nonumber 
&-8(1-c_1^2)x^2(2+c_1^2) + 24c_1(1-c_1^2)(1-|x|^2)s\Big).
\end{align}
Since both $H_{2,1}(F_{f^{-1}}/2)$ and the class $\mathcal{B}_{0}$ are invariant under rotations, we may assume, without loss of generality, that $c_1$ is real. \\By taking $u:=c_1\in [0 , 1 ]$ and $v:=|x|\in [0,1],$ and using  triangle inequality, the  expression (\ref{eq:5.3}) yields,
\begin{equation}\label{eq:5.4}
\begin{aligned}
\left| H_{2,1}\!\left(\frac{F_{f^{-1}}}{2}\right) \right|
\le  & \frac{(A-B)^2}{2304}\Bigl(
\left(11A^2-2AB-B^2 \right)u^4
+4|5A-B|u^2(1-u^2)v \\
& \qquad
+8(2+u^2)(1-u^2)v^2
+24u(1-u^2)(1-v^2)
\Bigr) \\
=  & \frac{(A-B)^2}{2304}\Bigl(
8(1-u^2)(1-u)(2-u)v^2
+4|5A-B|u^2(1-u^2)v \\
& \qquad
+\left(11A^2-2AB-B^2 \right)u^4
+24u(1-u^2)
\Bigr) =  \mathcal{J}_u(v).
\end{aligned}
\end{equation}
Since $\mathcal{J}_u$ defined by  (\ref{eq:5.4}) is increasing with respect to $v$, the maximum value of $\mathcal{J}_u$ occurs at the point $v=1$.\\
Therefore we get 
$$
\mathcal{J}_u(1)=\max\{J_u(v): 0\le v\le1\}
=\mathcal{L}(u) \textit{(say).}
$$
Where 
\begin{align}
\mathcal{L}(u) 
=  & \frac{(A-B)^2}{2304}\Bigl( 8(1-u^2)(u^2+2) \nonumber\\
& +4|5A-B|u^2(1-u^2)  +\left(11A^2-2AB-B^2 \right)u^4 \Bigr). \label{eq:5.5}
\end{align}
By putting  $u^2=t$, we rewrite the equation (\ref{eq:5.5}) as 
\begin{equation}\label{eq:5.6}
\mathcal{L}(t) = \frac{(A-B)^2}{2304}\left(Wt^2+Xt+ 16 \right) ~~\mbox{for}~~ 0 \le t\le1.
\end{equation}
Where $W = 11A^2-2AB-B^2-4|5A-B|-8$ and $ X = 4|5A-B|- 8 $. 

We need to find the maximum value of $\mathcal{L}$ defined by \eqref{eq:5.6} for $0\le t\le1$.  Let  consider 
\begin{equation}\label{eq:5.6A}
 Q(t)=Wt^2+Xt+ 16, ~~~~~ 0 \le t\le1.   
\end{equation}
Now we consider the following cases: 

\noindent \textbf{Case-I}:  If $W>0$, for $0 \leq t \leq 1$, from \eqref{eq:5.6A} we get 
$$
\max_{0\le t\le1} Q(t) = \max\{16 ,\;W+X+16\}.
$$
\noindent \textbf{Case-II}: If $W<0$, the parabola defined by \eqref{eq:5.6A} opens downward. Thus, the critical point of 
$Q$ lies in $( 0, 1 )$. A straightforward calculation shows that the critical point is at 
$$
t_0=-\frac{X}{2W}.
$$
So we obtain 
$$
\max_{0\le t\le1} Q(t) = \begin{cases}
16 -\dfrac{X^2}{4W},
& 0< t_0 <1, \\[2ex]
\max\{16,\;W+X+16\},
& \text{otherwise}.
\end{cases}
$$
\noindent \textbf{Case-III}: If $W = 0$, for $0 \leq t \leq 1$, from \eqref{eq:5.6A} we get
$$
\max_{0\le t\le1} Q(t) = \max\{16 ,\; X+ 16\}.
$$

Combining all the above cases, we have 
\begin{equation}\label{eq:5.7A}
 \max_{0 \le t \le 1} Q(t)  =   \begin{cases}
16-\dfrac{X^2}{4 W}, 
\quad W<0 \ \text{and} \ 0 < -\dfrac{X}{2W} < 1, \\ 
\max\{16,\;W+X+16\},
\quad  \ \text {otherwise} .
\end{cases}
\end{equation}
By substituting the values of $W$, $X$ in \eqref{eq:5.7A} and  in view of \eqref{eq:5.5}, from \eqref{eq:5.4}, we get 
\begin{equation}\label{eq:5.7}
\left| H_{2,1}\!\left(\frac{F_{f^{-1}}}{2}\right) \right|\le \begin{cases}\,\frac{(A-B)^2}{2304}\frac{16(11A^2-2AB -B^2)- 48|5A-B|-4(5A-B)^2-112}{[(11A^2-2AB-B^2)-4|5A-B|-8]},\qquad~~~~~~~ |5A-B| > 2,\\[3mm]
  \frac{(A-B)^2}{144}, \quad |5A-B|\le2.
\end{cases}
\end{equation}
The second inequality in (\ref{eq:5.7}) is sharp for the inverse of the function $g_2 \in \mathcal{C}(A, B)$ defined in (\ref{equ 3.5}).

\end{proof}

\section{Declarations}
\textit{Acknowledgements :} The first author would like to thank the UGC, Govt. of India, for the financial support (NTA Ref. No. 231620097064) in the form of a fellowship.\\
\textit{Author Contributions:} All the authors contributed equally to this manuscript and reviewed it. \\[2mm]
 \textit{Data Availability :} No datasets were generated or analysed during the current study. \\
\textit{Conflict of interest:} There is no competing interest.\\

\bibliographystyle{amsplain}
\bibliography{references}

\end{document}